\newcommand{\li}{\ell_\infty}
\newcommand{\e}{e}
\renewcommand{\sectionmark}[1]{\markright{{\jobname{\textrm.tex}\ \ \ \ \ \ \ \ \ \today\ \ \ \ }}} 
\setlist{nolistsep}
\newcommand{\Z}{\mathbb{Z}}
\newcommand{\F}{{\mbox F}}
\newcommand{\PG}{{\textup{PG}}}
\newcommand{\Fqq}{\mathbb{F}_{q^2}}
\newcommand{\Fq}{\mathbb{F}_{q}}
\newcommand\rd{{h}}
\newcommand\U{{\mathcal U}}
\newcommand\Uab{{\mathcal U}_{a,b}}
\newtheorem{theorem}{Theorem}
\newtheorem{lemma}[theorem]{Lemma}
\newtheorem{corollary}[theorem]{Corollary}
\newtheorem{conjecture}[theorem]{{{Conjecture}}}
\newtheorem{defn}[theorem]{Definition}
\theoremstyle{remark} 
\newcommand{\Label}{\label}
\newcommand{\Labele}{\label}
\begin{document}

\title{Triple O'Nan Configurations in Buekenhout-Metz Unitals of Odd Order}

\author{Wen-Ai Jackson\footnote{School of Mathematical Sciences, University of Adelaide, Adelaide 5005, Australia, Email:\tt{wen.jackson@adelaide.edu.au}}\ \  and Peter Wild\footnote{Royal Holloway, University of London, Egham TW20 0EX, Email:\tt{peterrwild@gmail.com}}}

\maketitle
AMS code: 51E20\\
Keywords: unital, Buekenhout-Metz unital, O'Nan configuration

\begin{abstract}An O'Nan configuration~\cite{ONan1972}  in a unital is a set of four lines forming a quadrilateral, where the six intersections of pairs of lines are points of the unital.  In 2019 Feng and Li~\cite{FengLi2019} elegantly construct O'Nan configurations in Buekenhout-Metz unitals, in particular, for odd order unitals. We extend their work by showing the existence of Triple O'Nan configurations (a configuration containing three distinct O'Nan configurations) in these odd order Buekenhout-Metz unitals.
 \end{abstract}



\section{Introduction}\Label{sec:intro}

A \emph{unital of order $n$} is a 2--$(n^3+1,n+1,1)$ design. Unitals of prime power order $q$ exist in the Desarguesian projective planes $\PG(2,q^2)$, as the absolute points and lines of a hermitian polarity. Such unitals are called \emph{classical} unitals. Buekenhout \cite{Buekenhout1976} constructed unitals in 2--dimensional translation planes. This was generalised by Metz \cite{Metz1979} to construct non-classical unitals in $\PG(2,q^2)$. These unitals are called Buekenhout-Metz (BM) unitals. They have the property that every line of $\PG(2,q^2)$ meets the unital in 1 or $q+1$ points. Further, the unital has exactly one point on the line at infinity, called the \emph{special} point. For a comprehensive treatise on unitals, see Barwick and Ebert \cite{thebook}.

An \emph{O'Nan configuration} in a unital is a set of four distinct lines forming a quadrilateral, whose six points of intersection lie in the unital.  In 1972, O'Nan \cite{ONan1972} noted that these configurations do not occur in the classical unital.  It was conjectured by Piper \cite{Piper1979} that this property characterises the classical unital within the class of all unitals.

In 2019, Feng and Li \cite{FengLi2019} showed the existence of O'Nan configurations in orthogonal BM unitals in $\PG(2,q^2)$, for both $q$ even and odd.  They also show the existence of O'Nan configurations in the Buekenhout-Tits unital.

We will consider a configuration called a \emph{Triple O'Nan configuration}, or Triple O'Nan, which is a set of six distinct lines pairwise intersecting in exactly 7 points of the unital, containing three distinct O'Nan configurations.

\begin{figure}[ht]
\centering
\input{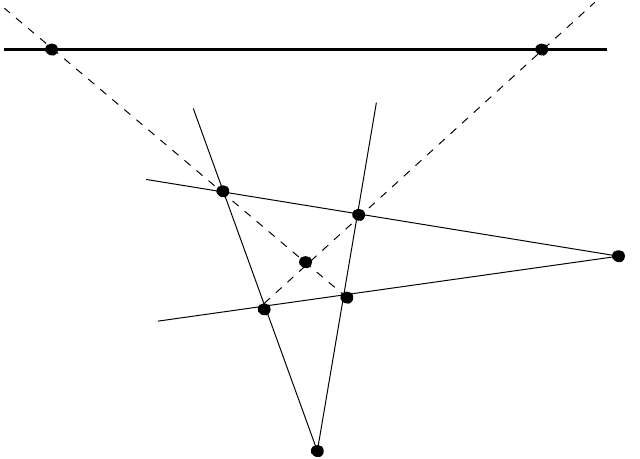_t}
\caption{BM-special Triple O'Nan}\label{fig:bigonan}
\end{figure}

Note that an O'Nan configuration can also be considered as four points of the unital forming a quadrangle with at least two of the three opposite pairs of sides meeting in points of the unital. If all three opposite pairs of sides are points of the unital, then this is a Triple O'Nan of the unital. More formally:

\begin{defn}  In $\PG(2,q^2)$, $q$ even or odd, consider a BM unital $\U$ with special point $T_\infty$ at infinity. A \emph{Triple O'Nan} configuration is a set of six lines defined in the following way. Suppose there exists a quadrangle of four points in $\U$ such that, for the three pairs of lines partitioning the points of the quadrangle, each pair intersects in a point of the unital. These three pairs of lines forming six lines are a \emph{Triple O'Nan}.
\end{defn}

Note that, as the definition of an O'Nan configuration is in terms of lines, there are three quadrangles in the six points, each determining a seventh point, so there are three possibilities for a Triple O'Nan containing a given O'Nan configuration. Figure~\ref{fig:bigonan} shows dotted lines being one of the three possible pairs of lines extending the four solid lines of an O'Nan configuration to a Triple O'Nan configuration..

More specifically, if $\{P,Q,X,Y\}$ are four points of a unital lying on a quadrangle, with $V=PQ\cap XY,M=PX\cap QY,N=PY\cap QX$ also points of the unital, then the six lines $PQ,PX,PY,QX,QY,XY$ is a Triple O'Nan with associated points $P,Q,X,Y,V,M,N$ of the unital. The  three O'Nan configurations are
\[
\{PX,QY,PY,QX\},\
\{PX,QY,PQ,XY\} \quad\mbox{and}\quad
\{PY,QX,PQ,XY\}.
\]

In this paper, we show the existence of Triple O'Nan configurations in BM unitals of odd order, and make the following conjecture:

\begin{conjecture}\Label{conj:big}
  Every non-classical unital $\U$ in $\PG(2,q^2)$ contains a Triple O'Nan configuration if $q>5$.
\end{conjecture}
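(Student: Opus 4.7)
The plan is to reduce to the case of Buekenhout--Metz unitals --- which, together with the Buekenhout--Tits unital in characteristic $2$, are the only non-classical unitals currently known to embed in $\PG(2,q^2)$ --- and to exhibit an explicit Triple O'Nan configuration inside each such unital for $q>5$. I would work in the coordinatisation used by Feng and Li~\cite{FengLi2019}, with the BM unital $\U$ having special point $T_\infty=(0,1,0)$ and affine points of the form $(\alpha u^2+\beta u+c,u,1)$ subject to the usual trace condition on $c$. Their construction already produces infinite families of O'Nan configurations; the task becomes to show that at least one O'Nan in such a family can be extended to a six-line Triple O'Nan.

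Guided by Figure~\ref{fig:bigonan}, I would parametrise a candidate configuration by placing $V=(0,0,1)$, $X=(0,s,1)$, $Y=(0,t,1)$ on the vertical line through $V$, and $P=(xj,j,1)$, $Q=(xk,k,1)$ on two further lines through $T_\infty$. Forcing the five points $V,X,Y,P,Q$ to lie on $\U$ yields five trace equations in the parameters $s,t,j,k,x\in\Fqq$, and requiring the two remaining diagonal intersections $M=PX\cap QY$ and $N=PY\cap QX$ to lie on $\U$ adds two more algebraic conditions. After using the stabiliser of $T_\infty$ to normalise $V$ to the origin and to rescale, the problem should reduce to finding a non-degenerate $\Fqq$-point on an explicit low-dimensional variety whose defining equations are quadratic or cubic in the BM parameters $(\alpha,\beta)$.

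The principal obstacle is to control this variety uniformly in $(\alpha,\beta)$ and to extract a non-degenerate $\Fqq$-point whenever $q>5$. I would proceed in two steps. First, starting from a Feng--Li O'Nan configuration I would fix five of the seven required unital points and treat the completion condition $M,N\in\U$ as cutting out a curve $C$ in the remaining parameters; by analysing its irreducible components I hope to show that $C$ is either a plane conic or a curve of genus bounded independently of $q$, so that a Hasse--Weil estimate forces $|C(\Fqq)|$ to exceed the count of degenerate loci (coincident points, points on tangents, points on the line at infinity) as soon as $q$ is large enough. The threshold $q>5$ should emerge from a careful tally of the excluded configurations. Second, the small parameter ranges not covered by the count I would dispose of by a direct computer search, as Feng--Li do for their analogous bounds.

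For the full conjecture, the residual obstruction is the classification of non-classical unitals: if every such unital in $\PG(2,q^2)$ is of BM or Buekenhout--Tits type then the argument above, together with a parallel even-order and Tits analysis, suffices, but for a hypothetical unknown family the template --- choose four unital points forming a quadrangle and impose that the three diagonal intersections also lie on $\U$ --- should still apply, and a similar Hasse--Weil style count should close the gap for large $q$. I expect this last step, the uniform curve-theoretic analysis, to be the most delicate part of the whole programme.
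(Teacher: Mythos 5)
The statement you are asked to prove is stated in the paper as a \emph{conjecture}, and the paper does not prove it: it only establishes partial supporting results, namely the existence of BM-special Triple O'Nan configurations in non-classical \emph{orthogonal Buekenhout--Metz} unitals of \emph{odd} order (Theorems~\ref{thm:bigonanoddnum} and~\ref{thm:bigonanoddspecial}), with the even-order case deferred to separate work and the general non-classical unital untouched. Your proposal is likewise a research programme rather than a proof, and it has concrete gaps beyond the one you acknowledge (the open classification of non-classical unitals). Most seriously, your first step --- ``starting from a Feng--Li O'Nan configuration I would fix five of the seven required unital points and treat the completion condition $M,N\in\U$ as cutting out a curve'' --- is provably a dead end for odd $q$: Theorem~\ref{thm:FLodd} of the paper shows that the Feng--Li odd-order O'Nan configuration can \emph{never} be extended to a Triple O'Nan, because each candidate seventh diagonal point either lies on $\li$ or forces the discriminant $d=(b-b^q)^2+4a^{q+1}$ to be a square, contradicting the definition of $\Uab$. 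Any successful construction must therefore start from a differently shaped configuration; the paper uses one in which the line $VPQ$ passes through $T_\infty$ while none of the seven configuration points is $T_\infty$.

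The remaining steps of your plan are also not carried out at the level of a proof: you do not exhibit the variety $C$, bound its genus, or perform the count of degenerate loci, so the claim that Hasse--Weil closes the argument for $q>5$ is an expectation, not an argument. For comparison, the paper avoids curve-theoretic estimates entirely in the odd case: after normalising by the automorphism group it reduces existence to making an explicit quantity (essentially $(X+1)(Y+1)$ with $X=-h^2/u^2$, $Y=-h^2u^2$) a non-square, and then counts admissible pairs $(X,Y)$ using cyclotomic numbers of order $2$ and $4$ (Baumert), obtaining positivity exactly for $q\ge 7$; for $a$ square it instead writes down explicit parameters $(k,x,s,t)$ satisfying the four Triple O'Nan equations. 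If you want to pursue your programme, you should (i) replace the Feng--Li starting configuration with one not ruled out by Theorem~\ref{thm:FLodd}, (ii) either carry out the cyclotomic count or genuinely establish the genus bound you invoke, and (iii) state clearly that the full conjecture remains out of reach because non-classical unitals in $\PG(2,q^2)$ are not classified.
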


\subsection{Triple O'Nan configurations in Buekenhout-Metz unitals}
In Baker and Ebert \cite{BakerEbert1992}, it is shown that no O'Nan configuration contains the special (infinite) point $T_\infty=(0,1,0)$ of a Buekenhout-Metz unital \cite[Lemma 7.42]{thebook}. Hence it follows there is no Triple O'Nan whose associated points contains the special point $T_\infty$. However, it is possible that a line of the Triple O'Nan as a line of the underlying plane contains $T_\infty$.

\begin{defn} In $\PG(2,q)$, $q$ odd, consider a BM unital  $\U$ with special point $T_\infty$ at infinity.
  We say a Triple O'Nan configuration in a BM unital in $\PG(2,q^2)$ with special point $T_\infty$ is a \emph{BM-special Triple O'Nan configuration} if one of the six lines of the Triple O'Nan contains the special point $T_\infty$.
  Otherwise, we call it a  \emph{BM-ordinary Triple O'Nan configuration}.
\end{defn}

\subsection{Coordinates of the orthogonal BM unitals}\Label{sec:defn}

For $q$ odd, Baker and Ebert~\cite{BakerEbert1992} showed that a BM unital in $\PG(2,q^2)$ is projectively equivalent to the following set of points
\[
\Uab=\{(x,ax^2+bx^{q+1}+r,1)\bigm| r\in\Fq,\ x\in\Fqq\}\cup\{T_\infty=(0,1,0)\}.
\]
where the discriminant $d=(b-b^q)^2+4a^{q+1}$ is a non-square in $\Fq$.
Further $\Uab$ is classical if and only if $a=0$.

If $b=0$ then $a$ is necessarily non-square, and $\U_{a,0}$ is called the \emph{conic} unital, as there is a decomposition of the affine points of $\U_{a,0}$ into affine conics~\cite[Theorem 4.19, p77]{thebook}.

Further, a point $(x,y,1)$ of $\PG(2,q^2)$ belongs to $\Uab$
\begin{eqnarray*}
  &\Leftrightarrow& y=ax^2+bx^{q+1}+r\quad\mbox{for some $r\in\Fq$}\\
  &\Leftrightarrow& y-(ax^2+bx^{q+1}) \in\Fq\\
  &\Leftrightarrow& y-(ax^2+bx^{q+1}) =( y-(ax^2+bx^{q+1}))^q\\
  &\Leftrightarrow& ax^2-a^qx^{2q}+(b-b^q)x^{q+1}) =y-y^q
\end{eqnarray*}
Hence we have $(x,y,1)\in\Uab$ if and only if
\begin{equation}
  ax^2-a^qx^{2q}+(b-b^q)x^{q+1} =y-y^q.\Labele{eqn:unital-pt}
\end{equation}


\subsection{The automorphism group of the BM unital, odd $q$}\Label{sec:autosG}

The automorphism group $G$ of the BM unital $\Uab$ was described by Baker and Ebert~\cite{BakerEbert1992}. For $a\ne 0$, $G$ is shown to fix the point $T_\infty=(0,1,0)$ and is generated by the following three automorphisms:
\[
\begin{array}{rll}
  \phi_t\colon &\ \mkern-18mu(x,y,z)\mapsto(x,y+tz,z),& t\in\Fq\\
  \psi_\gamma\colon &\ \mkern-18mu(x,y,z)\mapsto(x+\gamma z,(2a\gamma-(b^q-b)\gamma^q)x+y+(a\gamma^2+b\gamma^{q+1})z,z),& \gamma\in\Fqq\\
  \mu_\delta\colon &\ \mkern-18mu(x,y,z)\mapsto(\delta x,\delta^2y,z),&
\end{array}
\]
where if $b\in\Fq$ then $\delta^2\in\Fq^*$ and if $b\not\in\Fq$ then $\delta\in\Fq^*$.

The group $K=\{\phi_t\bigm| t\in\Fq\}$ fixes each point on $\li$, and each line through $T_\infty=(0,1,0)$, {\it i.e.} is an elation subgroup of centre $T_\infty$ and axis $\li$.  So $K$ is transitive on the points of $\Uab$ on each fixed line through $T_\infty$.

The group $S=\{\psi_\gamma\phi_t\bigm| \gamma\in\Fqq,\ t\in\Fq\}$ has order $q^3=|\Uab\backslash \{T_\infty\}|$ and acts regularly on $\Uab\backslash \{T_\infty\}$.

For more detail see  \cite{BakerEbert1992}.

We will use the automorphism group $G$ of a non-classical BM-unital to find the form of a Triple O'Nan.
As $G$ fixes the unital and is transitive on the lines through $T_\infty$, and on the points through the lines through the special point $T_\infty$, any BM-special Triple O'Nan is equivalent to one that contains the line $[1,0,0]$ with affine unital points $(0,c,1)$ with $c\in\Fq$, and we can further assume the the Triple O'Nan configuration contains the point $V=(0,0,1)$, with two further points $X=(0,s,1)$ and $Y=(0,t,1)$ of the unital, with $s,t\in\Fq^*$ and $s\ne t$.

Exactly one of the points $V,X,Y$ has just two lines of the Triple O'Nan through it, and we {may} choose this point to be $V$. One line through $V$ is then $VXY$. The other line contains two points $P,Q$ of the Triple O'Nan configuration. Then the six lines of the Triple O'Nan are $PX,QY,PY,QX,VXY,VPQ$.

The line $VPQ$ has infinite {point} $(x,1,0)$ for some $x\in\Fqq^*$ (note that $[0,1,0]$ with infinite point $(1,0,0)$ is a tangent of the  unital, and contains only the point $(0,0,1)$ of the unital, and hence is not a line of the unital), and so $P$ and $Q$ have coordinates $(xj,j,1)$ and $(xk,k,1)$ respectively, for some $j,k\in\Fqq^*$, $j\ne k$ (note that $V,P,Q$ are distinct).  The four points $P,Q,X,Y$ are a quadrangle that generates the BM-special Triple O'Nan.

\subsection{Equivalence of BM unitals}\Label{sec:equiv}

Baker and Ebert \cite{BakerEbert1992} also investigated the equivalence classes of BM unitals under the automorphism group of the plane. They show that two BM unitals $\Uab$, $\U_{a',b'}$ are equivalent if and only if there exist $v\in\Fq^*$, $\gamma\in\Fqq^*$ and $u\in\Fq$ and $\tau$ a field automorphism of $\Fq$ with
\[
(a',b')=(a^\tau\gamma^2 v,\ b^\tau\gamma^{q+1}v+u).
\]
From~\cite[Corollary 4.15, p 74]{thebook}, if $q$ is an odd prime, then there are $\frac12(q+1)$ inequivalent BMunitals, one of which is the classical unital. Hence for $q=3,5$ there is the classical unital and the conic unital. For $q=5$ there is an additional unital $\Uab$ with $a$ being a non-zero square and $b\ne 0$.
\subsection{Main results of paper}

Feng and Li \cite{FengLi2019} have three  elegant constructions of O'Nan configurations: one for $q$ odd and two for $q$ even. In this paper we show that their  O'Nan configuration for $q$ odd cannot be extended to a Triple O'Nan configuration

\begin{theorem}\Label{thm:FLodd}
  The $q$ odd O'Nan construction of Feng and Li~\cite{FengLi2019} cannot be extended {to} a Triple O'Nan.
\end{theorem}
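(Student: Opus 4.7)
The plan is to take Feng and Li's explicit quadrangle of four unital points and check directly whether the third of the three diagonal points of this quadrangle lies in $\Uab$. By construction, two of the three diagonal points already lie in $\Uab$ (that is precisely what makes the four lines an O'Nan configuration); the question is whether the remaining diagonal point can also lie in $\Uab$, since this is exactly the extra condition needed to promote the O'Nan to a Triple O'Nan.

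First, I would reproduce the Feng--Li quadrangle $\{P_1,P_2,P_3,P_4\}\subset\Uab$ from \cite{FengLi2019} in the coordinates of Section~\ref{sec:defn}, together with the two unital diagonal points that their construction produces. Using the automorphism group $G$ from Section~\ref{sec:autosG}, I would apply a suitable combination of $\psi_\gamma$, $\phi_t$ and $\mu_\delta$ to normalise the configuration: for example, placing one of the four vertices at $V=(0,0,1)$. Since $S$ acts regularly on $\Uab\backslash\{T_\infty\}$ this is essentially free, and it reduces the number of genuine parameters to a handful.

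Next, I would compute the third diagonal point $R=P_iP_j\cap P_kP_\ell$ (for the pair of opposite sides which Feng--Li do \emph{not} use) as an explicit rational expression in the remaining parameters, write $R=(x_R,y_R,1)$, and substitute into the defining equation
\[
ax_R^{2}-a^q x_R^{2q}+(b-b^q)x_R^{q+1}=y_R-y_R^{q}
\]
of $\Uab$, given in (\ref{eqn:unital-pt}). After clearing denominators and then reducing modulo the two relations encoding that the other two diagonal points already lie in $\Uab$, everything collapses to a single polynomial condition $F(a,b,\textrm{parameters})=0$ over $\Fqq$.

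The main obstacle will be showing that this condition $F=0$ has no admissible solution. I expect the contradiction to emerge by separating $F$ into its $\Fq$-trace and Frobenius-conjugate parts using (\ref{eqn:unital-pt}): the two parts together should force either $a=0$, contradicting the non-classicality of $\Uab$, or they should force the discriminant $d=(b-b^q)^2+4a^{q+1}$ to be a square in $\Fq$, contradicting the BM hypothesis. A secondary obstacle is that the Feng--Li quadrangle has built-in symmetries under relabelling of its four vertices, so I would need to verify that no relabelling swaps the ``third'' diagonal with one of the ``good'' two in a way that evades the argument; this is handled by checking that the contradiction argument is invariant under the relevant permutations of the diagonals.
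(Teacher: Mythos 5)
Your overall strategy --- compute the candidate seventh point(s), substitute into (\ref{eqn:unital-pt}), and force a contradiction with the non-square discriminant condition --- is the same as the paper's, and your prediction of where the contradiction comes from (the discriminant $d=(b-b^q)^2+4a^{q+1}$ being forced to be a square) is exactly right for the cases that matter. But there is a concrete gap in \emph{which} cases your plan actually addresses. An O'Nan configuration has six unital points, and there are \emph{three} quadrangles among them, hence three distinct candidate seventh points, one for each way of completing the O'Nan to a Triple O'Nan. Your main computation fixes the quadrangle to be the four Feng--Li vertices $P_{\lambda_1},P_{\lambda_2},P_{\lambda_1}^\sigma,P_{\lambda_2}^\sigma$; for that quadrangle the two ``good'' diagonal points are $P=(0,0,1)$ and $R=(0,r,1)$, and the ``remaining'' diagonal point is $P_{\lambda_1}P_{\lambda_1}^\sigma\cap P_{\lambda_2}P_{\lambda_2}^\sigma=(1,0,0)$, which lies on $\li$ and is excluded trivially (the only unital point on $\li$ is $T_\infty$). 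So the elaborate machinery you set up (clearing denominators, reducing modulo the two membership relations, extracting a polynomial $F$) is aimed at the one case that needs none of it, while the two substantive cases --- the seventh points $I_1=PR\cap P_{\lambda_1}P_{\lambda_1}^\sigma=(0,-x_{\lambda_1},1)$ and $I_2=PR\cap P_{\lambda_2}P_{\lambda_2}^\sigma=(0,-x_{\lambda_2},1)$, arising from the quadrangles $\{P_{\lambda_i},P_{\lambda_i}^\sigma,P,R\}$ --- are left to your ``secondary obstacle.''

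That secondary step cannot be discharged by a symmetry argument as you propose: the map $\sigma$ and the relabellings of the four vertices fix each of the three diagonals individually (they only interchange $I_1$ with $I_2$), so no symmetry carries the trivial case $(1,0,0)$ onto $I_1$ or $I_2$. Those two cases genuinely require the computation, though it is simpler than your general plan suggests: $I_1,I_2$ lie on the secant $PR=[1,0,0]$, whose unital points are exactly $(0,c,1)$ with $c\in\Fq$, so $I_i\in\Uab$ if and only if $x_{\lambda_i}\in\Fq$; by the explicit formula (\ref{eqn:Pl1}) this is equivalent to $a-b\in\Fq$, which forces $d=4a_0^2$ to be a square in $\Fq$, contradicting the definition of $\Uab$. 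Once you replace your single ``third diagonal'' by the correct list of three candidate points and run this argument for $I_1$ and $I_2$ (they are interchanged by $\lambda_1\leftrightarrow\lambda_2$, so one computation suffices for both), your proof closes.
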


We investigate BM-special Triple O'Nans and show that:
\begin{theorem}\Label{thm:bigonanoddnum}
  The number of BM-special Triple O'Nan configurations on a non-classical orthogonal BM unital $\Uab$ in $\PG(2,q^2)$ for $q$ odd depends only on the value of $q$ and whether $a$ is a square or not.
\end{theorem}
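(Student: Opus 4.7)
The plan is to parametrize BM-special Triple O'Nans using the normal form of Section~\ref{sec:autosG}, then establish invariance of the count by combining the equivalence of Section~\ref{sec:equiv} with a direct comparison across non-equivalent orbits that share a common square-class of $a$.

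By Section~\ref{sec:autosG}, every BM-special Triple O'Nan on $\Uab$ is $G$-equivalent to one encoded by a tuple $(s,t,x,j,k)$ with $s,t\in\Fq^*$, $s\ne t$ and $x,j,k\in\Fqq^*$, $j\ne k$, corresponding to $V=(0,0,1)$, $X=(0,s,1)$, $Y=(0,t,1)$, $P=(xj,j,1)$, $Q=(xk,k,1)$, $M=PX\cap QY$, $N=PY\cap QX$. The number of BM-special Triple O'Nans equals the number of such tuples with $P,Q,M,N\in\Uab$, divided by the order of the stabilizer in $G$ of the normal form and by the labelling symmetries $s\leftrightarrow t$ and $j\leftrightarrow k$. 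Writing $f(w):=aw^2-a^qw^{2q}+(b-b^q)w^{q+1}$, Equation~\ref{eqn:unital-pt} renders the $P, Q$ conditions as $j-j^q=f(xj)$ and $k-k^q=f(xk)$. I would then compute $M$ and $N$ by line intersection, impose the analogous identities $M_y-M_y^q=f(M_x)$ and $N_y-N_y^q=f(N_x)$, and eliminate $j-j^q$ and $k-k^q$ using the $P, Q$ equations, reducing the full system to two polynomial conditions in $(s,t,x,j,k)$ whose coefficients depend on $(a,b)$.

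The equivalence of Section~\ref{sec:equiv} is induced by a projective equivalence of unitals and therefore preserves the Triple O'Nan count; since $\Fq^*\subset(\Fqq^*)^2$ for odd $q$, the square-class of $a$ in $\Fqq^*$ is an invariant of this action. The subtle point is that within one square-class of $a$ there can be more than one equivalence class of BM unitals: for $a$ non-square both the conic case $b\in\Fq$ and a non-conic case $b\notin\Fq$ can yield valid BM unitals (this happens for $q\geq 7$), and these two orbits are not related by the equivalence of Section~\ref{sec:equiv}. The theorem therefore reduces to showing that these distinct orbits produce the same number of solutions to the reduced counting system.

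\textbf{Main obstacle.} The hardest step is establishing this cross-orbit equality. I expect to tackle it either by exhibiting an explicit substitution on the tuples $(s,t,x,j,k)$ — not arising from a projective equivalence of unitals — that matches the solution set for $\Uab$ with $b\in\Fq$ to that for $\U_{a,b'}$ with $b'\notin\Fq$, or by evaluating both counts in closed form and checking agreement. The rational nature of the $M, N$ conditions (involving both numerators and denominators in $(s,t,x,j,k)$) makes the bookkeeping delicate; any such substitution must intertwine all four unital-membership conditions simultaneously, so that the reduced system is carried to itself. Once established, the count is manifestly a function only of $q$ and the square-class of $a$, as claimed.
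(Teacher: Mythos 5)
Your reduction to a parametrized counting problem and your identification of the decisive difficulty are both sound: the projective equivalence of Section~\ref{sec:equiv} only handles unitals within one orbit, the square-class of $a$ in $\Fqq^*$ is indeed invariant under that equivalence, and the real content of the theorem is the comparison across \emph{inequivalent} unitals sharing a square-class of $a$. Your first candidate route for that comparison --- an explicit substitution on the parameter tuples, not induced by any collineation, matching the solution sets of the two membership systems --- is exactly what the paper does. However, you have not carried it out, and this is precisely where all the work lies, so as it stands the proposal has a genuine gap rather than a proof. Concretely, the paper (Lemma~\ref{lem:sq}) normalizes $b=b_1e$ with $b_1\in\Fq$, fixes $(h,s,t)$ with $h=j/k$, and introduces $\Delta=ax^2k^2$ and $\Theta=(xk)^{q+1}$; to pass from $\U_{a,b_1e}$ to $\U_{a,b_2e}$ it keeps $\Delta$ and $\Theta$ fixed, subtracts the $P$- and $Q$-membership equations of the two unitals to get a linear system determining a new $m$ (hence $y$ with $ay^2m^2=\Delta$), and then verifies that the $M$- and $N$-equations for the new unital follow automatically, the key point being that $W,U,V,Z$ depend only on $(h,s,t)$ and so the differences of the $M$- and $N$-equations are $\Fq$-linear consequences of the differences of the $P$- and $Q$-equations. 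Without exhibiting this map and performing that verification, the "reduced system is carried to itself" claim is an assertion, not an argument; your alternative route (closed-form evaluation of both counts) is not attempted either and would be substantially harder.

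A secondary inaccuracy: you frame the cross-orbit comparison as being between just two cases, $b\in\Fq$ (conic) and $b\notin\Fq$. After the normalization $b=b_1e$, the inequivalent unitals with a fixed non-square $a$ are indexed by $b_1$ up to sign (and Frobenius), so for larger $q$ there are many orbits to reconcile, not two; any correct argument must treat arbitrary pairs $b_1,b_2\in\Fq$ uniformly, as the paper's lemma does. Also note that your setup divides by stabilizer orders and labelling symmetries to count configurations; for the theorem as stated (a count of configurations on the unital, not of orbits) you would additionally need these normalizing factors to agree across the unitals being compared, which is true but should be said.
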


\begin{theorem}\Label{thm:bigonanoddspecial}
For odd $q$, each non-classical orthogonal BM unital $\Uab$ contains a BM-special Triple O'Nan configuration, with the exception for $q=3,5$ {for the conic unitals}.
\end{theorem}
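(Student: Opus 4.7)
The plan is to exploit the reduction to normal form carried out in Section~\ref{sec:autosG}. Every BM-special Triple O'Nan is equivalent, under the action of the automorphism group $G$, to a configuration of six lines $VXY, VPQ, PX, QY, PY, QX$ with $V=(0,0,1)$, $X=(0,s,1)$, $Y=(0,t,1)$, $P=(xj,j,1)$ and $Q=(xk,k,1)$, where $s,t\in\Fq^*$ are distinct, $x\in\Fqq^*$, and $j,k\in\Fqq^*$ are distinct. Existence of such a configuration in $\Uab$ is then equivalent to the four points $P$, $Q$, $M := PX\cap QY$, and $N := PY\cap QX$ all satisfying~\eqref{eqn:unital-pt}. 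A short linear-algebra calculation expresses $M$ and $N$ as explicit rational functions of $(s,t,x,j,k)$, with denominators $sk-tj$ and $tk-sj$ respectively; substituting into~\eqref{eqn:unital-pt} yields an affine system $\mathcal V_{a,b}$ of four polynomial conditions in the five unknowns, augmented by the non-degeneracy requirements which guarantee a genuine quadrangle.

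By Theorem~\ref{thm:bigonanoddnum}, the number of BM-special Triple O'Nans on $\Uab$ depends only on $q$ and on whether $a$ is a square. Combining this with the equivalence classification in Section~\ref{sec:equiv}, it suffices to exhibit a single solution of $\mathcal V_{a,b}$ on one representative $(a,b)$ per equivalence class distinguished by these two invariants --- typically the conic class ($b=0$, $a$ non-square) and a non-conic class with $a$ square or non-square as appropriate for the given $q$. My main strategy for existence is to impose a symmetry ansatz, such as $t=-s$ or $k=\lambda j$ for a judicious constant $\lambda$, which collapses $\mathcal V_{a,b}$ to a system in two or three unknowns; from there either a constructive solution should be visible by direct inspection, or a Hasse-Weil/Lang-Weil bound applied to the resulting low-degree curve or surface will furnish an $\Fqq$-point for all $q$ exceeding an explicit threshold.

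The small cases not covered by the asymptotic argument are handled by direct finite search, which simultaneously verifies the two exceptional non-existence claims, namely that the conic unitals $\U_{a,0}$ with $q\in\{3,5\}$ admit no BM-special Triple O'Nan. The main obstacle I anticipate is the algebraic mess at the heart of the existence argument: the rational expressions for $M$ and $N$ mix Frobenius-twisted quantities (the $j^q$ and $x^{q+1}$ terms in~\eqref{eqn:unital-pt}) with the parameters $s,t$ that are constrained to lie in $\Fq$, and finding a substitution that decouples these two sources of $\Fq$-linearity sufficiently to make the resulting polynomial tractable --- and to make the dominant geometric component of $\mathcal V_{a,b}$ visibly absolutely irreducible, so that Lang-Weil applies --- is where the real work lies.
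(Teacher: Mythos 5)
Your skeleton---normalize the putative configuration via the automorphism group of Section~\ref{sec:autosG}, invoke Theorem~\ref{thm:bigonanoddnum} to reduce to one representative $(a,b)$ per class distinguished by $q$ and the quadratic character of $a$, exhibit one solution per representative, and dispose of small $q$ (including the two non-existence claims for the conic unitals at $q=3,5$) by machine search---is exactly the architecture of the paper's proof. Your candidate ansatzes are also in the right spirit: $k=\lambda j$ is literally the paper's substitution $h=j/k$, and the paper's $a$-square, $q\equiv 1\pmod 4$ construction does impose $s^2+t^2=0$ (though your specific suggestion $t=-s$ collides with the $s+t$ denominators that appear throughout the working parametrization, e.g.\ in $c=\frac{s^2+t^2}{2(s+t)}$ of Lemma~\ref{lem:suff}).

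The gap is that the existence step, which is the entire content of the theorem, is not carried out, and the Lang--Weil fallback you propose is not shown to apply and faces a concrete obstruction that you flag but do not resolve. After the paper's ansatz the system does not collapse to a curve with an evident absolutely irreducible $\Fq$-component; it collapses to a \emph{power-residue} condition: for the conic unital, existence is equivalent to the explicit quantity in~(\ref{eqn:stcond}) being a non-square in $\Fqq$, equivalently to finding $X=-h^2/u^2$ and $Y=-h^2u^2$ in prescribed quartic (resp.\ quadratic) residue classes with $(X+1)(Y+1)$ a non-square in $\Fq$. The solution set therefore splits into pieces indexed by multiplicative characters---precisely the situation in which absolute irreducibility of the relevant component is delicate, and in which the answer provably changes with the class of $a$ (as Theorem~\ref{thm:bigonanoddnum} already indicates). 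The paper resolves this not by point counting on a variety but by exact cyclotomic-number identities from Baumert, which yield a strictly positive count for every $q\ge 7$; this matters because the theorem's exceptional set is precisely $q\in\{3,5\}$, so an ineffective or large Lang--Weil threshold would leave a range of $q$ covered neither by the asymptotic argument nor by a feasible search. For $a$ square the paper does what your ``visible by direct inspection'' branch hopes for---fully explicit $k,x,s,t$ in Theorems~\ref{thm:asq14} and~\ref{lem:q3asqONAN}, plus one small quadratic-residue count when $q\equiv 3\pmod 4$---but finding those expressions is the real work, and the proposal does not supply it.
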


Finally, we investigate further properties of these Triple O'Nans.


\section{The O'Nan construction of Feng and Li}
We now describe the Feng and Li construction~\cite[Section 3.1]{FengLi2019}  for $q$ odd.
In $\PG(2,q^2)$, $q$ odd, let $\sigma=\mu_{-1}$ (see Section~\ref{sec:autosG}), so $\sigma\colon (x,y,z)\mapsto (-x,y,z)$. Let $P=(0,0,1)$ and let $\ell_1=[1,1,0]$ and $\ell_{-1}=[1,-1,0]$ be two lines through $P$.  Let $R=(0,r,1)\in\Uab$ for some $r\in\Fq^*$  and let $\ell'$, $\ell''=\sigma(\ell')$ be two lines through $R$. Then define
\begin{eqnarray}
  P_{\lambda_1}&=&\ell_1\cap\ell'=(x_{\lambda_1},-x_{\lambda_1},1),\quad
  x_{\lambda_1}=\frac{a^q+\lambda_1-b}{a^{q+1}-(\lambda_1-b)^{q+1}}\Labele{eqn:Pl1}\\
   P_{\lambda_2}&=&\ell_1\cap\ell''=(x_{\lambda_2},-x_{\lambda_2},1,)\quad
  x_{\lambda_2}=\frac{a^q+\lambda_2-b}{a^{q+1}-(\lambda_2-b)^{q+1}}\nonumber
\end{eqnarray}
and
\[
r=-\frac{2x_{\lambda_1}x_{\lambda_2}}{x_{\lambda_1}+x_{\lambda_2}}.
\]
Feng and Li show that there eixsts $\lambda_1,\lambda_2\in\Fq$ with $\lambda_1\ne \lambda_2$ such that the four lines
\[
\ell_1=P_{\lambda_1}P_{\lambda_2},\ \ell_{-1}=P_{\lambda_1}^\sigma P_{\lambda_2}^\sigma ,\
\ell'=P_{\lambda_1}P_{\lambda_2}^\sigma ,\ \ell''=P_{\lambda_1}^\sigma P_{\lambda_2}.
\]
form an O'Nan configuration.

We now prove Theorem~\ref{thm:FLodd}.
\begin{proof}

  As discussed earlier (Section~\ref{sec:intro}), to extend the O'Nan to a Triple O'Nan, we calculate the final diagonal points of the three quadrangles of the configuration and check whether any of these belong to the unital. Let $R=(0,r,1)$. The  three final diagonal points are
  \begin{eqnarray*}
    &&I_0=P_{\lambda_1}P_{\lambda_1}^\sigma \cap P_{\lambda_2}P_{\lambda_2}^\sigma=(1,0,0)\\
    &&I_1=PR\cap P_{\lambda_1}P_{\lambda_1}^\sigma=(0,-x_{\lambda_1},1)\\
    &&I_2=PR\cap P_{\lambda_2}P_{\lambda_2}^\sigma=(0,-x_{\lambda_2},1)
  \end{eqnarray*}
  We note that $I_0\in \li=[0,0,1]$ is not a point of $\Uab$, as the only point of $\Uab$ on $\li$ is $T_\infty=(0,1,0)$.
  As $I_1,I_2,P,R,(0,1,0)$ are collinear, and $PR=[1,0,0]$ and the points of $\Uab$ on $PR$ are $(0,c,1)$ with $c\in\Fq\cup\{\infty\}$, it follows that $I_1$ and $I_2$ are points of $\Uab$ if and only if $x_{\lambda_1}$ and $x_{\lambda_2}$ are  elements of $\Fq$.

  Now  $x_{\lambda_1}\in\Fq$  if and only if $x_{\lambda_1}^q=x_{\lambda_1}$, which holds by (\ref{eqn:Pl1}) if and only if $a-b\in\Fq$. Write $a=a_0+a_1\e$, $b=b_0+b_1\e$. If $a-b\in\Fq$ then $a_1=b_1$ {and the} discriminant for $\Uab$ is
  \[
d=(b-b^q)^2+4aa^q=(4b_1^2)+4(a_0^2-wa_1^2)=4a_0^2
  \]
  which we see is a square in $\Fq$, contradicting the definition of $\Uab$ (Section~\ref{sec:defn}). Thus $I_1$ is never a point of $\Uab$. The argument for  $x_{\lambda_2}$ is the same. As none of the three possible points belong to $\Uab$, it follows that the O'Nan is not contained in a Triple O'Nan.
\end{proof}

\section{BM-special Triple O'Nan configurations in BM unitals for $q$ odd}\Label{sec:typea}

\subsection{Construction of BM-special Triple O'Nans for $q$ odd}\Label{sec:TypeA}

We now give a construction for a putative BM-special Triple O'Nan and examine under what conditions it is a Triple O'Nan.

For $q\ge 3$ consider $\Uab$ in  $\PG(2,q^2)$. By Section~\ref{sec:autosG}, without loss of generality, let $P=(xj,j,1)$ and $Q=(xk,k,1)$ with $x,j,k\in\Fqq^*$, $X=(0,s,1)$ and $Y=(0,t,1)$ with $s,t\in\Fq^*$. Let $V=(0,0,1)$. Let $M=PX\cap QY$ and $N=PY\cap QX$. Then the six lines (see Figure~\ref{fig:bigonan})
\[
PXM,YQM,PYN,XQN,PVQ,YVX
\]
form a BM-special Triple O'Nan if and only if, the seven points $P,Q,X,Y,V,M,N$ belong to $\Uab$, if and only if,  $P,Q,M,N$ belong to the $\Uab$ (as by definition the points $V,M,N$ belong to $\Uab$).

We calculate
\begin{eqnarray*}
  PX=[j-s,-xj,sxj],\quad &&QX=[k-s,-xk,sxk]\\
  PY=[j-t,-xj,txj],\quad &&QY=[k-t,-xk,txk]
\end{eqnarray*}
and so
\begin{eqnarray*}
  M&=&\left(
  \frac{jkx(s-t)}{sk-tj},\
  \frac{jk(s-t)+st(k-j)}{sk-tj},\
    1
    \right)\\
    N&=&\left(
  \frac{jkx(t-s)}{tk-sj},\
  \frac{jk(t-s)+st(k-j)}{tk-sj},\
    1
    \right)
\end{eqnarray*}
As $j,k\in\Fqq^*$, we can write $h=j/k\in\Fqq^*$.
Let
\begin{eqnarray}
W=\frac{\rd(s-t)}{s-t\rd},\quad U=\frac{st(1-\rd)}{s-t\rd}\Labele{eqn:WUdef}
\end{eqnarray}
and swapping $s$ and $t$ let
\begin{eqnarray}
V=\frac{\rd(t-s)}{t-s\rd},\quad Z=\frac{st(1-\rd)}{t-s\rd}.\Labele{eqn:VZdef}
\end{eqnarray}
An easy calculation {shows} that
\[
M=\left(kxW,kW+U,1\right),\quad
N=\left(kxV,kV+Z,1\right).
\]
By definition $V,X,Y$ belong to $\Uab$. Using (\ref{eqn:unital-pt}), the conditions that $P,Q,M,N$ belong to $\Uab$ are:
\begin{eqnarray}
  a(xk)^2-a^q(xk)^{2q}+(b-b^q)(xk)^{q+1}&=&k-k^q \Labele{eqn:P}\\
  a(xk)^2h-a^q(xk)^{2q}h^q+(b-b^q)(xk)^{q+1}\rd^{q+1}&=&\rd k-(\rd k)^q\Labele{eqn:Q}\\
  a(xk)^2W^2-a^q(xk)^{2q}W^{2q}+(b-b^q)(xk)^{q+1}W^{q+1}&=&(kW+U)-(kW+U)^q \Labele{eqn:M}\\
  a(xk)^2V^2-a^q(xk)^{2q}V^{2q}+(b-b^q)(xk)^{q+1}V^{q+1}&=&(kV+Z)-(kV+Z)^q \Labele{eqn:N}
 \end{eqnarray}
We call these equations the \emph{BM-special Triple O'Nan equations}.

\subsection{Existence Conditions for BM-special Triple O'Nans, for $q$ odd}

Using the notation for  BM-special Triple O'Nans as discussed in Section~\ref{sec:TypeA}, we aim to show that, for a given value of $q$, the number of  BM-special Triple O'Nans with points $P,Q,V,X,Y,M,N$, for given $s,t,h$,  depends only on whether $a$ is square or non-square in $\Fqq$.

Fix $q$. The parameter $a$ of $\Uab$ is either a square or non-square in $\Fqq$. By Section~\ref{sec:equiv}, if $a$ is a square then we can assume that  $a$  is a fixed element $a_s\in\Fq$. If $a$ is a non-square, then we can assume that $a$ is a fixed non-square element $a_n\in\Fqq\backslash \Fq$. Further, we can assume that  $b$ is of the form $b_1e$ with $b_1\in\Fq$ where $e$ is a fixed element of $\Fqq\backslash \Fq$.

\begin{lemma} \Label{lem:sq}
Suppose there exists a BM-special Triple O'Nan  in the BM unital {${\mathcal U}_{a,b_1e}$} with $b_1\in\Fq$ with parameters  $(a,b_1e,x,k,\rd,s,t)$ with $j=k\rd$.  Let
\begin{equation}
  \Delta={\Delta_{{\mathcal U}_{a,b_1e}}}=ax^2k^2,\quad \Theta=(xk)^{q+1}=\sqrt{ (\textstyle\frac\Delta a)^{q+1}}.\Labele{def:deltatheta}
\end{equation}

Suppose {${\mathcal U}_{a,b_2e}$} is another unital with $b_2\in\Fq$. Then there exists $y,m\in\Fqq$ and an  BM-special Triple O'Nan  with parameters $(a,b_2e,y,m,\rd,s,t)$ in {${\mathcal U}_{a,b_2e}$}.
\end{lemma}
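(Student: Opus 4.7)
The plan is to construct $y, m \in \Fqq$ from $x, k$ by perturbing $k$ while keeping the product $ym = xk$ fixed. This preserves the quantities $a(xk)^2$, $a^q(xk)^{2q}$, and $\Theta = (xk)^{q+1}$ appearing on the left-hand sides of (\ref{eqn:P})--(\ref{eqn:N}), so that only the coefficient $(b - b^q)$ of the middle term differs between the two unitals. Set $m = k + \delta$ for a correction $\delta \in \Fqq$ to be determined, and then take $y = xk/m$.

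Introduce $f = e - e^q$ as a basis for the trace-zero elements of $\Fqq$ over $\Fq$, so that every $z \in \Fqq$ decomposes uniquely as $z = z_0 + z_1 f$ with $z_0, z_1 \in \Fq$; in particular write $\delta = M_0 + M_1 f$. Subtracting the equation for $b_1$ from its counterpart for $b_2$ in each of (\ref{eqn:P})--(\ref{eqn:N}), and using $ym = xk$ to collapse the left-hand sides, the four equations reduce to
\[
2(M_0 \lambda_1 + M_1 \lambda_0) \;=\; (b_2 - b_1)\Theta\, N_\lambda, \qquad \lambda \in \{1, h, W, V\},
\]
with $N_\lambda = \lambda \lambda^q \in \Fq$. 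The case $\lambda = 1$ gives $M_1 = (b_2 - b_1)\Theta/2$; substituting into the case $\lambda = h$ then determines $M_0 = (b_2 - b_1)\Theta (N_h - h_0)/(2 h_1)$, provided $h \notin \Fq$.

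The crux will be showing that with these values of $M_0, M_1$ the equations for $\lambda = W$ and $\lambda = V$ hold automatically. This reduces to the identity
\[
\frac{N_W - W_0}{W_1} \;=\; \frac{N_h - h_0}{h_1} \;=\; \frac{N_V - V_0}{V_1}.
\]
I will verify it by direct computation: substituting $W = h(s-t)/(s-th)$ and rationalizing by the Galois conjugate $s - th^q$ of the denominator, one gets the $\Fq$-denominator $D_W = (s-th)(s-th^q)$ and, after expansion, $W_1 = sh_1(s-t)/D_W$. The decisive cancellation is $N_h(s-t) - h_0(s-th_0) - h_1^2 t\rho = s(N_h - h_0)$ (where $\rho = f^2 \in \Fq$), yielding $N_W - W_0 = s(s-t)(N_h - h_0)/D_W$, so the ratio collapses to $(N_h - h_0)/h_1$. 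The $\lambda = V$ identity follows by the $s \leftrightarrow t$ symmetry of the defining formulas in (\ref{eqn:WUdef}) and (\ref{eqn:VZdef}).

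The main obstacle is therefore the algebraic identity above, an elementary but intricate computation in $\Fqq$; once it is in hand, $m$ and $y$ are given by explicit closed-form formulas and the new BM-special Triple O'Nan in $\mathcal{U}_{a,b_2 e}$ is in place (with $m \neq 0$ holding generically). A secondary concern is the degenerate case $h \in \Fq$ (so $h_1 = 0$), in which the construction above breaks down and would require a separate treatment.
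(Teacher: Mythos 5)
Your proposal is correct and takes essentially the same route as the paper: define $m$ (equivalently $\delta=m-k$) from the differences of the first two Triple O'Nan equations, choose $y$ with $ym=\pm xk$ so that $\Delta$ and $\Theta$ are unchanged, and check that the remaining two difference equations follow automatically --- your coordinate identity $(N_W-W_0)/W_1=(N_h-h_0)/h_1$ is exactly the paper's rewriting of $(k-m)W-(k-m)^qW^q$ as an $\Fq$-combination of $\rd(k-m)-\rd^q(k-m)^q$ and $(k-m)-(k-m)^q$, expressed in components. The degenerate case $\rd\in\Fq$ that you flag is implicitly excluded in the paper's argument as well (its equation determining $m_0$ degenerates there too), so this is a shared caveat rather than a new gap.
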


\begin{corollary}\Label{cor:oddqnum}
  For a given value of $q$ odd, the number of BM-special Triple O'Nans in $\Uab$ is only dependent on whether $a$ is a square or not in $\Fqq$.
\end{corollary}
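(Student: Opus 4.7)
The plan is to reduce the corollary to Lemma~\ref{lem:sq} via the equivalence classification of Section~\ref{sec:equiv}. By that classification, every orthogonal BM unital in $\PG(2,q^2)$ is projectively equivalent to one in the normalized form ${\mathcal U}_{a, b_1 e}$, where $a$ is either a fixed square $a_s \in \Fq^*$ or a fixed non-square $a_n \in \Fqq \setminus \Fq$, and $b_1 \in \Fq$. Since projective equivalences preserve the unital, its special point $T_\infty$, and the incidence structure of $\PG(2,q^2)$, they carry BM-special Triple O'Nans to BM-special Triple O'Nans bijectively. Hence the count depends only on the equivalence class, and it suffices to show that the count is the same across normalized unitals ${\mathcal U}_{a, b_1 e}$ for fixed $a \in \{a_s, a_n\}$ as $b_1$ varies over $\Fq$.

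For this, I would fix $a$ and compare the counts for any two choices $b_1, b_2 \in \Fq$. Lemma~\ref{lem:sq} supplies, from each BM-special Triple O'Nan in ${\mathcal U}_{a, b_1 e}$ with parameters $(x, k, h, s, t)$, a BM-special Triple O'Nan in ${\mathcal U}_{a, b_2 e}$ with the same $(h, s, t)$ and an appropriate new pair $(y, m)$. Applying the same lemma with the roles of $b_1$ and $b_2$ exchanged produces a map in the opposite direction. The remaining step is to check that these two constructions are mutually inverse, which should follow by inspecting the explicit formulas behind Lemma~\ref{lem:sq}; in particular, the normalizations $\Delta$ and $\Theta$ from (\ref{def:deltatheta}) encode precisely how the pair $(x,k)$ must transform under the change $b_1 \mapsto b_2$. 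Injectivity is automatic because the four points $P = (xj, j, 1)$, $Q = (xk, k, 1)$, $X = (0, s, 1)$, $Y = (0, t, 1)$ (with $j = kh$) determine the entire configuration, so distinct parameter tuples give distinct Triple O'Nans.

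The main obstacle is absorbed into Lemma~\ref{lem:sq}; once its parameter correspondence is in hand, the corollary reduces to bookkeeping. The only subtlety is to verify that Lemma~\ref{lem:sq} yields a genuine bijection rather than a one-sided existence statement, but because the lemma applies symmetrically in both directions $b_1 \leftrightarrow b_2$, the two induced maps act as identity on the parameters $(h, s, t)$ and invert each other on $(x, k)$. Consequently, the number of BM-special Triple O'Nans in ${\mathcal U}_{a, b_1 e}$ is independent of $b_1 \in \Fq$, and the total count in any orthogonal BM unital $\Uab$ depends only on $q$ and on whether $a$ is a square or non-square in $\Fqq$.
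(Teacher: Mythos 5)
Your proposal is correct and follows the same route as the paper: the paper offers no separate proof of the corollary, treating it exactly as you do — normalize $\Uab$ via the equivalence classification of Section~\ref{sec:equiv} to ${\mathcal U}_{a,b_1e}$ with $a$ a fixed square or fixed non-square, then invoke Lemma~\ref{lem:sq} symmetrically in $b_1\leftrightarrow b_2$ to transfer configurations (with $h,s,t$ fixed and $\Delta,\Theta$ preserved) between unitals sharing the same $a$. Your extra remarks on injectivity and mutual inverses only make explicit what the paper leaves implicit.
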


\begin{proof} (of Lemma~\ref{lem:sq})
We will rewrite the equations so they are easier to manipulate. Firstly we need some notation. For any element or expression $Y$ over $\Fqq$, write
\[
\boxed{Y} = Y-Y^q.
\]
As we have a BM-special Triple O'Nan in {${\mathcal U}_{a,b_1e}$}, consider the \emph{BM-special Triple O'Nan equations}, with $\Delta,\Theta$ defined in (\ref{def:deltatheta}), $W,U$ defined in (\ref{eqn:WUdef}) and $V,Z$ defined in (\ref{eqn:VZdef}).
\begin{eqnarray}
  \boxed{\Delta}+2b_1e\Theta&=&\boxed{k}\Labele{eqn:P1}\\
  \boxed{\Delta h}+2b_1e\Theta\rd^{q+1}&=&\boxed{k\rd}      \Labele{eqn:Q1}\\
  \boxed{\Delta W^2}+2b_1e\Theta W^{q+1}&=&\boxed{kW}+\boxed{U}\ \Labele{eqn:M1}\\
  \boxed{\Delta V^2}+2b_1e\Theta V^{q+1}&=&\boxed{kV}+\boxed{Z}  \Labele{eqn:N1}
\end{eqnarray}

Now consider {${\mathcal U}_{a,b_2e}$} with $b_2\in\Fq$. We will show there exists $y,m$ with $ax^2k^2=ay^2m^2=\Delta$, and so $(xk)^{q+1}=(ym)^{q+1}=\Theta$. Note that $W,V,U,Z$ only involve $\rd,s,t$ which is the same for both configurations. Hence $(a,b_2e,y,m,\rd,s,t)$ is a BM-special Triple  O'Nan if the following four equations hold
\begin{eqnarray}
  \boxed{\Delta}+2b_2e\Theta&=&\boxed m\Labele{eqn:P2}\\
  \boxed{\Delta h}+2b_2e\Theta\rd^{q+1}&=&\boxed{m\rd}     \Labele{eqn:Q2}\\
  \boxed{\Delta W^2}+2b_2e\Theta W^{q+1}&=&\boxed{mW}+\boxed{U}\ \Labele{eqn:M2}\\
  \boxed{\Delta V^2}+2b_2e\Theta V^{q+1}&=&\boxed{mV}+\boxed{Z}  \Labele{eqn:N2}
\end{eqnarray}

We will use (\ref{eqn:P2}) and (\ref{eqn:Q2}) to define $y,m$ with $ay^2m^2=\Delta$. Firstly, we will define $m$ satisfying (\ref{eqn:P2}) and (\ref{eqn:Q2}) and then we will let $y^2=\frac{\Delta}{am^2}$. Now consider  (\ref{eqn:P1}) minus (\ref{eqn:P2}) and similarly  (\ref{eqn:Q1}) minus (\ref{eqn:Q2}) to get
\begin{eqnarray}
  2(b_1-b_2)e\Theta&=&k-k^q-(m-m^q)\Labele{eqn:W1}\\
  2(b_1-b_2)e\Theta\rd^{q+1}&=&k\rd-(k\rd)^q-( m\rd-(m\rd)^q) \nonumber\\
  &=&
  \rd(k-m)-\rd^q(k-m)^q\Labele{eqn:W2}
\end{eqnarray}
If we write $m=m_0+m_1e$ then we see from (\ref{eqn:W1}) that we can get an expression for $m_1$ in terms of $k_1$ and  $2(b_1-b_2)e\Theta$.  Similarly from (\ref{eqn:W2}) this is linear in $m_0,m_1,k_0,k_1$ and we can find an expression for $m_0$ in terms of $l_0,k_1$ and $\Theta$. This defines $m$ and hence $y$ as discussed.

It remains to show that (\ref{eqn:M2}) and (\ref{eqn:N2}) hold. For  (\ref{eqn:M2}), given that (\ref{eqn:M1}) holds, this is equivalent to showing that (\ref{eqn:M1}) minus (\ref{eqn:M2}) holds. That is, show that
\begin{eqnarray}
  2(b_1-b_2)e\Theta W^{q+1}&=&(k-m)W-(k-m)^qW^q\Labele{eqn:P4a}
\end{eqnarray}
and similarly
\begin{eqnarray}
  2(b_1-b_2)e\Theta V^{q+1}&=&(k-m)V-(k-m)^qV^q\Labele{eqn:P4b}
\end{eqnarray}
We have
\[
W=\frac{\rd(s-t)}{s-t\rd},\quad
W^q=\frac{\rd^q(s-t)}{s-t\rd^q},\quad
W^{q+1}=\frac{\rd^{q+1}(s-t)^2}{(s-t\rd)(s-t\rd^q)},
\]
so
\begin{eqnarray*}
  &&\mbox{RHS (\ref{eqn:P4a})}\\
  &=&(k-m)\frac{\rd(s-t)}{s-t\rd}-(k-m)^q \frac{\rd^q(s-t)}{s-t\rd^q}\\
  &=&\frac{s-t}{(s-t\rd)(s-t\rd^q)}\left(
\rd(k-m)(s-t\rd^q)-\rd^q(k-m)^q(s-t\rd)
\right)\\
&=&\frac{s-t}{(s-t\rd)(s-t\rd^q)}\Bigl(
s(\rd(k-m)-\rd^q(k-m)^q)\\
&&\quad\quad\quad\quad -t\rd^{q+1}((k-m)-(k-m)^q)
\Bigr)\\
&=&\frac{s-t}{(s-t\rd)(s-t\rd^q)}\Bigl(
  s2(b_1-b_2)e\Theta\rd^{q+1}-t\rd^{q+1} 2(b_1-b_2)e\Theta
  \Bigr)\\
  &&\quad\quad\quad \mbox{by (\ref{eqn:W2}) and (\ref{eqn:W1})}\\
&=&\frac{\rd^{q+1}(s-t)^2}{(s-t\rd)(s-t\rd^q)}2(b_1-b_2)e\Theta\\
&=&W^{q+1}2(b_1-b_2)e\Theta\\
&=&\mbox{LHS (\ref{eqn:P4a})}
\end{eqnarray*}
and the proof of (\ref{eqn:P4b}) is similar. This shows that the configuration is an  BM-special Triple O'Nan as required.
\end{proof}

\subsection{Non-existence of BM-special Triple O'Nans for $q=3,5$}

The unitals for $q=3,5$ were discussed in Section~\ref{sec:equiv}, and for $a$ non-square, there is only {one} class of equivalent unitals, being the conic unital. A search using Magma~\cite{magma} shows:
\begin{lemma}\Label{lem:qeq35}
  For $q=3$ and $5$, if $a$ is non-square, the {conic} unital  does not contain any BM-special Triple O'Nans.
\end{lemma}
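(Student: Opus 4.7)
The plan is to verify the lemma by a direct finite computation, since for $q\in\{3,5\}$ the entire search is contained in a small finite set. By the normalization in Section~\ref{sec:autosG}, any BM-special Triple O'Nan in $\U_{a,0}$ is projectively equivalent to a configuration with $V=(0,0,1)$, $X=(0,s,1)$, $Y=(0,t,1)$, $P=(xj,j,1)$ and $Q=(xk,k,1)$, where $s,t\in\Fq^*$ with $s\ne t$ and $x,j,k\in\Fqq^*$ with $j\ne k$. Such a configuration is a BM-special Triple O'Nan precisely when the four equations (\ref{eqn:P})--(\ref{eqn:N}) all hold simultaneously. For the conic unital we have $b=0$, so the middle term $(b-b^q)(xk)^{q+1}(\cdot)$ drops out of every equation and the system reduces to
\[
a(xk)^2 E^2-a^q(xk)^{2q}E^{2q}=\boxed{kE}+\boxed{F},
\]
with $(E,F)\in\{(1,0),(\rd,0),(W,U),(V,Z)\}$, where $W,U,V,Z$ are determined by $s,t,\rd=j/k$ via (\ref{eqn:WUdef}) and (\ref{eqn:VZdef}).

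The search space is small. For $q=3$ we have $|\Fq^*|=2$ and $|\Fqq^*|=8$, giving at most $2\cdot 1\cdot 8\cdot 8\cdot 7$ candidate tuples $(s,t,x,j,k)$; for $q=5$ it is $4\cdot 3\cdot 24\cdot 24\cdot 23$, still tiny. Further reductions are available if desired (the action of $\mu_\delta$ scales $(s,t)$ by $\delta^2$ and $(x,j,k)$ compatibly, which can be used to normalise one parameter), but none are needed.

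The implementation in Magma is then routine: (i) build $\Fqq=\Fq(e)$ for a fixed non-square $e$; (ii) fix $a$ to be a non-square in $\Fq$ (since $b=0$ forces $a$ non-square, and by Section~\ref{sec:equiv} the conic unital is the unique equivalence class for $q=3,5$); (iii) iterate over all admissible $(s,t,x,j,k)$; (iv) compute $\rd=j/k$ and $W,U,V,Z$; (v) test whether all of (\ref{eqn:P})--(\ref{eqn:N}) hold with $b=0$; (vi) record any hit. The lemma asserts that the hit-list is empty.

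There is no real mathematical obstacle here, only an implementation one: the statement is a finite non-existence claim over a fully specified finite search space. The one useful sanity check is to rerun the same script for $q=5$ with $a$ a non-zero square and $b=b_1 e$, $b_1\in\Fq$; this should produce solutions, consistent with Theorem~\ref{thm:bigonanoddspecial}, confirming that the search is correctly implemented and that the non-existence for the conic case is genuine rather than an artefact of a coding error.
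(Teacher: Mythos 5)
Your approach is exactly the paper's: the lemma is established there by an exhaustive Magma search, with no further written argument, and your setup (normalising via the automorphism group and testing equations (\ref{eqn:P})--(\ref{eqn:N}) with $b=0$ over the small finite parameter space) matches it. One correction to step (ii): for the conic unital $a$ must be a non-square in $\Fqq$, hence $a\in\Fqq\setminus\Fq$ --- not a non-square of $\Fq$ --- since every element of $\Fq^*$ is a square in $\Fqq^*$, so taking $a$ a non-square of $\Fq$ would make the discriminant $4a^{q+1}=4a^2$ a square in $\Fq$ and yield no unital at all; with that fixed, the search is as routine as you describe.
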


\subsection{Existence of BM-special O'Nans for $q>5$ odd, $a$ non-square: the Conic Unital}\Label{sec:conicA}

Consider a BM-special Triple O'Nan, using the notation as above. Recall that for the conic unital we may assume that $b=0$. Hence the conditions that $P$ and $Q$ belong to the unital are
\begin{eqnarray*}
  a(xj)^2-j\in\Fq\quad\mbox{and}\quad  a(xk)^2-k\in\Fq.
\end{eqnarray*}

\begin{lemma}\Label{lem:suff} Given odd $q$, in the conic unital $\Uab$ with $b=0$, the following conditions are sufficient for there to be an  BM-special Triple O'Nan with parameters $(a,0,x,k,\rd,s,t)$
  \begin{enumerate}
  \item $s,t\in \Fq^*$, $s^2\ne t^2$, $s^2+t^2\ne 0$ {\it i.e.}  $u^4\ne 1$ where $u=s/t$.
  \item $h^2\in\Fq$, $h^2$ non-square in $\Fq$, $h^2\ne -1$.
  \item $k= c(1+\frac1{h})$ where $c=\frac{s^2+t^2}{2(s+t)}$.
  \item
    \begin{equation}
    2a(xk)^2=-\left(1+\frac1{h^2}\right)\frac{st}{s+t} +\frac1{h}\frac{s^2+t^2}{s+t}\Labele{eqn:stcond}
    \end{equation}
  \end{enumerate}
Hence a BM-special Triple O'Nans with parameters  $(a,0,x,k,\rd,s,t)$  exists for suitable choices of $s,t,h$ if and only if the expression for $2a(xk)^2$ above is a non-square in $\Fqq$.
\end{lemma}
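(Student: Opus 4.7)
The plan is to verify the four BM-special Triple O'Nan equations~(\ref{eqn:P1})--(\ref{eqn:N1}) with $b_1=0$ under conditions~(1)--(4), and to read off the realisability criterion from condition~(4).

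Condition~(2) gives $h\notin\Fq$ with $(h^2)^q=h^2$, so $h^q=-h$ and $h^{q+1}=-h^2$; hence any element of the shape $\alpha+\beta/h$ with $\alpha,\beta\in\Fq$ has conjugate $\alpha-\beta/h$. Condition~(3) places $k$ in this shape, with $k+k^q=2c$ and $k-k^q=2c/h$, and condition~(4) puts $\Delta:=ax^2k^2$ into the same shape, with $\Delta+\Delta^q=-(1+1/h^2)\frac{st}{s+t}$ and $\Delta-\Delta^q=\frac{1}{h}\cdot\frac{s^2+t^2}{s+t}=2c/h$. Short boxed computations using $h^q=-h$ and $h^{2q}=h^2$ then turn equations~(\ref{eqn:P1}) and~(\ref{eqn:Q1}) into the tautologies $2c/h=2c/h$ and $2ch=2ch$ respectively.

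The substantive step is equation~(\ref{eqn:M1}); equation~(\ref{eqn:N1}) will follow from the symmetry $V(s,t)=W(t,s)$, $Z(s,t)=U(t,s)$. Setting $d_1=s-th$ and $d_2=s+th=s-th^q$, so that $d_1d_2=s^2-t^2h^2\in\Fq$, direct calculation gives
\[
W+W^q=\frac{2th^2(s-t)}{d_1d_2},\quad W-W^q=\frac{2sh(s-t)}{d_1d_2},\quad U-U^q=-\frac{2sth(s-t)}{d_1d_2}.
\]
Decompose $\Delta=A+B$ with $A=(\Delta+\Delta^q)/2\in\Fq$ and $B=c/h$; equation~(\ref{eqn:M1}) then reads
\[
A(W-W^q)(W+W^q)+B(W^2+W^{2q})=c(W-W^q)+\tfrac{c}{h}(W+W^q)+(U-U^q),
\]
and on substituting the above, both sides collapse to $h(s-t)^3/(d_1d_2)$; the key algebraic step is the polynomial identity
\[
-2s^2t^2(h^2+1)+(s^2+t^2)(s^2+t^2h^2)=(s^2-t^2)(s^2-t^2h^2).
\]

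For the concluding ``if and only if'', condition~(4) fixes the value of $2ax^2k^2$; given $a$ non-square and $k$ as in~(3), an $x\in\Fqq^*$ realising this value exists iff $\mathrm{RHS}/(2ak^2)$ is a nonzero square in $\Fqq$. Because $q$ is odd, $\Fq^*\subseteq(\Fqq^*)^2$, so $2k^2$ is a square in $\Fqq$; as $a$ is non-square, $2ak^2$ is non-square, and therefore such an $x$ exists iff the right-hand side of~(\ref{eqn:stcond}) is non-square in $\Fqq$. The principal obstacle is the identity collapsing both sides of equation~(\ref{eqn:M1}); everything else is either an immediate check via $h^q=-h$, or a consequence of the $s\leftrightarrow t$ symmetry.
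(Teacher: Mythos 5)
Your proposal is correct and follows essentially the same route as the paper: verify the four membership conditions for $P,Q,M,N$ by direct computation exploiting $h^q=-h$, get $N$ from the $s\leftrightarrow t$ symmetry, and read off the realisability criterion from the fact that $2ak^2$ is a non-square in $\Fqq$ (your decomposition $\Delta=A+B$ and the identity $-2s^2t^2(h^2+1)+(s^2+t^2)(s^2+t^2h^2)=(s^2-t^2)(s^2-t^2h^2)$ are just a cleaner bookkeeping of the paper's coordinate computation for $M$). One remark: your reduction of equation~(\ref{eqn:Q1}) to the tautology $2ch=2ch$ implicitly reads its first term as $\boxed{\Delta h^2}$ rather than the printed $\boxed{\Delta h}$; this is the correct form of that equation (it is what membership of $P=(xhk,hk,1)$ actually requires, and matches~(\ref{eqn:202})), so your argument is sound.
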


\begin{proof}
In part 1, $s^2\ne t^2$ is equivalent to $u^2\ne 1$, and $s^2+t^2\ne 0$ is equivalent to $u^2\ne -1$, hence this is equivalent to $u^4\ne 1$. Note that such a $u$ exists only if $q>5$.

  We first note, for the final conclusion, that, for conic unitals, $a$ is a non-square in $\Fqq$, and so the equation above has a solution for $x$ whenever the expression for $2a(xk)^2$ is a non-square in $\Fqq$.

  Suppose we satisfy the conditions, so $x,k,h,s,t$ exist satisfying the conditions. Recall that as $h^2\in\Fq$, and $h^2$ is a non-square in $\Fq$, it follows that $h^q=-h$. Note that multiplying an element of $\Fqq$ by a square in $\Fqq$ does not change whether it is a square or not in $\Fqq$.

  We show that $P,Q,X,Y,M,N$ all belong to $\Uab$ {we} already know $V\in \Uab$).

Recall that $Q=(xk,k,1)$ so we have
\begin{eqnarray*}
  &&a(xk)^2-a^2(xk)^{2q}-k+k^q\\
  &=&\frac12\left(\frac{2c}{h}-\left(1+\frac1{h^2}\right)\frac{st}{s^2+t^2}\right)-
  \frac12\left(\frac{2c}{h}-\left(1+\frac1{h^2}\right)\frac{st}{s^2+t^2}\right)^q-\frac{2c}{h}\\
  &=&\frac12\left(\frac{2c}{h}-\left(1+\frac1{h^2}\right)\frac{st}{s^2+t^2}\right)-
  \frac12\left(\frac{2c}{(-h)}-\left(1+\frac1{h^2}\right)\frac{st}{s^2+t^2}\right)-\frac{2c}{h}\\
  &=&\frac12\left(\frac{4c}{h}\right)-\frac{2c}{h}\\
  &=&0
\end{eqnarray*}
so by (\ref{eqn:unital-pt}) this means $Q\in \Uab$. Similarly using $j=h k$ we can show $P\in \Uab$.

We now show that $M\in \Uab$ and $N\in \Uab$.

Firstly we note that $sk-tj=sk-th k=k(s-th)$ so we will multiply (\ref{eqn:unital-pt}) by $(s-th)^{2(q+1)}$ to simplify.
From the definition of $M$ {(Section~\ref{sec:TypeA})} consider the {LHS of (\ref{eqn:unital-pt})}
\begin{eqnarray}
  &&\left[a\left(\frac{{h^2} kkx(s-t)^2}{k(s-th)}\right)^2-a^q\left(\frac{h kkx(s-t)^2}{k(s-th)}\right)^{2q}\right](s-th)^{2(q+1)}\nonumber\\
  &=&{h^2}(s-t)^2\left[ak^2x^2(s+th)^2-a^qk^{2q}x^{2q}(s-th)^2\right]\nonumber\\\
  &=&{h^2}(s-t)^2\left[(s^2+{h^2}t^2)[ak^2x^2-a^qk^{2q}x^{2q}]+2sth(ak^2x^2+a^qk^{2q}x^{2q})\right]\nonumber\\\
  &=&{h^2}(s-t)^2\left[(s^2+{h^2}t^2)[k-k^q]+2sth(ak^2x^2+a^qk^{2q}x^{2q})\right]\nonumber\\\
  &=&{h^2}(s-t)^2\left[(s^2+{h^2}t^2)\frac{2c}{h}+2sth\left(-\left(1+\frac1h\right)\left(\frac{st}{s+t}\right)\right)\right]\nonumber\\\
  &=&{h^2}(s-t)^2\left[(s^2+{h^2}t^2)\left(\frac{s^2+t^2}{s+t}\right)+2sth({h^2}+1)\left(\frac{st}{s+t}\right)\right]\nonumber\\\
  &=&\frac{(s-t)^2}{s+t}h\left[(s^2+{h^2}t^2)(s^2+t^2)-2st({h^2}+1)\right]\nonumber\\\
   &=&(s-t)^3h(s^2-{h^2}t^2).\Labele{eqn:lhs1}
\end{eqnarray}
Now consider the {RHS of (\ref{eqn:unital-pt})}
\begin{eqnarray*}
  &=&\left[\left(\frac{h k(s-t)^2}{(s-th)}\right)-\left(\frac{h k(s-t)^2}{(s-th)}\right)^q+\left(\frac{st(1-h)}{s-th}\right)-\left(\frac{st(1-h)}{s-th}\right)^q\right](s-th)^{2(q+1)}\\
  &=&(s-th)^{q+1}\Bigl[(s-t)h\left(k(s+th)+k^q(s-th)\right)\\
    &&\quad\quad    +\ st\left((1-h)(s+th)+(1+h)(s-th)\right)\Bigr]\\
  &=&(s-th)^{q+1}\left[(s-t)h\left(s\cdot 2c+th\cdot \frac{2c}{h}\right)+st\left(2h(t-s)\right)\right]\\
  &=&(s^2-h^2t^2)(s-t)h(s-t)^2
\end{eqnarray*}
which is equal to (\ref{eqn:lhs1}), hence (\ref{eqn:unital-pt}) holds, proving that $M$ is a point of the unital. The argument showing that $N$ belongs to the unital is similar.
\end{proof}

Now it remains to find $s,t,h$ satisfying the requirements discussed above.

 \begin{lemma}\Label{lem:aoddq34}
   For odd $q$ with
   $q\ge 7$ there exists BM-special Triple O'Nan configurations for conic unitals.
  \end{lemma}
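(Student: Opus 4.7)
The plan is to apply Lemma~\ref{lem:suff} and produce concrete $s,t\in\Fq^*$ and $h$ (with $h^2\in\Fq^*$ a non-square, $h^2\ne -1$) satisfying its four sufficient conditions. Items 1--3 pose no difficulty: once we pick $s,t,h$ meeting items 1 and 2, the quantities $c,k,j$ are forced by item 3. The real content is item 4: we must show that for $q\ge 7$ odd the right-hand side $E$ of (\ref{eqn:stcond}) is a non-square in $\Fqq$ for some admissible $(s,t,h)$, since then $2a(xk)^2=E$ has a solution $x\in\Fqq$ (recall $a$ is a non-square in the conic case $b=0$).

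Setting $\alpha:=h^2\in\Fq^*$ and $u:=s/t\in\Fq^*$ and clearing denominators gives
\[
E=\frac{t}{\alpha(u+1)}\bigl(-(\alpha+1)u+h(u^2+1)\bigr)=\frac{t}{\alpha(u+1)}\,C',\qquad C':=-(\alpha+1)u+h(u^2+1).
\]
Since $q$ is odd, $q+1$ is even, so $\Fq^*\subset(\Fqq^*)^2$; hence the prefactor $t/(\alpha(u+1))\in\Fq^*$ is a square in $\Fqq^*$, and $E$ is a non-square iff $C'$ is. For the latter I would use the standard norm criterion: for $q$ odd, $z\in\Fqq^*$ is a square iff $N(z)=z\cdot z^q\in\Fq^*$ is a square in $\Fq^*$ (this follows from $z^{(q^2-1)/2}=N(z)^{(q-1)/2}$). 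Using $h^q=-h$, a direct calculation yields
\[
N(C')=(\alpha+1)^2u^2-\alpha(u^2+1)^2=(\alpha-u^2)(\alpha u^2-1),
\]
the factorization coming from viewing the middle expression as a quadratic in $u^2$ with discriminant $(\alpha^2-1)^2$. The problem therefore reduces to finding a non-square $\alpha\in\Fq^*$ with $\alpha\ne -1$ and a $u\in\Fq^*$ with $u^4\ne 1$ such that $(\alpha-u^2)(\alpha u^2-1)$ is a non-square in $\Fq^*$.

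I would establish this by a character-sum count, with the remaining small cases treated separately. Let $\chi$ be the quadratic character of $\Fq$ and fix a non-square $\alpha\ne -1$. The polynomial $f(w)=(\alpha-w^2)(\alpha w^2-1)\in\Fq[w]$ has the four distinct roots $\pm\sqrt{\alpha},\pm\alpha^{-1/2}$ in $\overline{\Fq}$ (distinct because $\alpha\notin\{0,\pm 1\}$), hence is squarefree of degree $4$, and the Weil bound yields $\bigl|\sum_{w\in\Fq}\chi(f(w))\bigr|\le 3\sqrt{q}$. Since each nonzero square $v=w^2$ arises from exactly two values of $w$, this in turn bounds $\sum_{v\text{ nonzero square}}\chi((\alpha-v)(\alpha v-1))$. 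As the number of admissible $v$ (nonzero squares with $v\ne 1$, and $v\ne -1$ when $-1$ is a square) is $\tfrac{q-1}{2}-O(1)$, comparing the two estimates shows that the value $\chi=-1$ is attained for some valid $v$ whenever $q$ is sufficiently large. The main obstacle is the finite list of small $q$ not reached by the asymptotic estimate, which I would dispatch either by exhibiting an explicit valid $(\alpha,u)$ for each such $q$, or by a brief exhaustive search in Magma in the spirit of Lemma~\ref{lem:qeq35}.
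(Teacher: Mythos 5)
Your reduction is correct and coincides with the paper's: after stripping off the $\Fq^*$-prefactor (a square in $\Fqq$) and applying the norm criterion, both arguments arrive at the same condition, namely that $(u^2-\alpha)(1-\alpha u^2)$ be a non-square in $\Fq$ for some non-square $\alpha\ne-1$ and some $u$ with $u^4\ne1$ (the paper writes this as $(X+1)(Y+1)$ with $X=-h^2/u^2$, $Y=-h^2u^2$, which differs from your expression by the square factor $1/u^2$). Your direct parametrisation by $(\alpha,u)$ is in fact cleaner than the paper's: by working with $X,Y$ the paper must additionally arrange that $X/Y$ be a fourth power, which forces it into quartic cyclotomy when $q\equiv1\pmod4$. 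Where the two proofs genuinely diverge is the counting step. The paper counts the admissible pairs \emph{exactly} using Baumert's cyclotomic numbers $(i,j)_4$ (for $q\equiv1\pmod 4$) and $(i,j)_2$ (for $q\equiv3\pmod 4$), and the resulting closed-form count is positive for every $q\ge7$; this buys a uniform treatment of the whole range at the cost of some cyclotomic bookkeeping. You instead fix $\alpha$ and apply the Weil bound to the squarefree quartic $f(w)=(\alpha-w^2)(\alpha w^2-1)$, which buys a short and conceptually transparent argument but only an asymptotic one.

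The gap is that the asymptotic argument does not reach the bottom of the claimed range. Since $\alpha$ is a non-square, $f$ has no roots in $\Fq$, so the number of $w$ with $\chi(f(w))=-1$ is at least $\tfrac12(q-3\sqrt q)$, and after excluding the at most five values $w\in\{0,\pm1,\pm\sqrt{-1}\}$ you need $q-3\sqrt q>10$, i.e.\ $q>25$. That leaves $q\in\{7,9,11,13,17,19,23,25\}$ entirely unproved; these are not a negligible tail but a substantial part of the statement, and your proposal only promises (rather than supplies) explicit witnesses or a computer check for them. Until those eight cases are exhibited, the lemma is established only for $q\ge27$. The paper's cyclotomic count is precisely what avoids this residual verification.
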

 \begin{proof} As discussed in the proof of Lemma~\ref{lem:suff}, we need $q>5$.

We consider conditions that make (\ref{eqn:stcond}) a non-square in $\Fqq$.

We simplify (\ref{eqn:stcond})
\begin{eqnarray*}
  &&-\left(1+\frac1{h^2}\right)\frac{st}{s+t} +\frac1{h}\frac{s^2+t^2}{s+t}\\
  &=&-\frac1{h^2(s+t)}\left((h^2+1)st-h(s^2+t^2)\right)\\
  &=&-\frac1{h^2(s+t)}(t-hs)(s-ht).
\end{eqnarray*}
As $h^2,s,t\in\Fq$, this expression is a non-square in $\Fqq$ if and only if $(t-hs)(s-ht)$ is a non-square in $\Fqq$, if and only if the following is a non-square in $\Fq$:
\[
\left((t-hs)(s-ht)\right)^{q+1}=(t^2-h^2s^2)(s^2-h^2t^2)=\frac1{s^2t^2}\left(-\frac{h^2s^2}{t^2}+1\right)
\left(-\frac{h^2t^2}{s^2}+1\right)
\]
Write $u=s/t\in\Fq$, and as $s^2t^2$ is a square in $\Fq$, this {expression} is a non-square in $\Fq$ if and only if
\begin{equation}
(-h^2/u^2+1)(-h^2u^2+1)=(X+1)(Y+1)\Labele{eqn:XY}
\end{equation}
is a non-square in $\Fq$, where $X=-h^2/u^2,Y=-h^2u^2$.

We will use the results from Baumert \cite[p119 onwards]{baumert}. We treat the cases $q \equiv 1 \pmod 4$ and $q \equiv 3 \pmod 4$ separately.

\subsubsection{BM-special Triple O'Nans for $q>5$ odd, $a$ non-square, $q\equiv 1\pmod 4$}
Suppose firstly that $q\equiv 1\pmod 4$, so $q=4m+1=2f+1,f=2m$. We note that $-1$ is a square.The cases when $m$ is even or odd have different but similar treatments.

 We make use of  cyclotomic numbers (see Dickson or Baumert (following theorem 5.18) for more detail). Put
\[
R_0=\{w^{4t}\mid  t \in \mathbb Z \}.
\]
Then $R_0$ is a subgroup of $\Fq^*$ of order $\frac{q-1}{4}$. Put
\[
R_i=w^iR_0=\{ w^{4t+i} \bigm| t \in  \mathbb Z,\ i=1,2,3\}
\]
For $0 \le i,j \le 3$ define the following \emph{cyclotomic numbers}
\[
(i,j)_4=|\{ (x,x+1)\mid x \in \Fq^*,\ x \in R_i,\ x+1 \in R_j \}|.
\]

Baumert gives the following equations~\cite[fourth paragraph following thm 5.16]{baumert} that the cyclotomic numbers satisfy, where, here and throughout, the subscripts are treated as (arbitrary) residues modulo $4$ and $\delta_{ij}$ is $1$ if $i=j$ and is $0$ otherwise. The following equations are from~\cite[5.21-3]{baumert}:

When $q\equiv 1 \pmod 8$:

\begin{equation}
  (i,j)_4=(-i,j-i)_4=(j,i)_4,\quad  \sum_{j=0}^3 (i,j)_4=\frac{q-1}{4}-\delta_{0i}.\Labele{eqn:q1-8}
\end{equation}

When $q\equiv 5 \pmod 8$:

\begin{equation}
  (i,j)_4=(-i,j-i)_4=(j+2,i+2)_4,\quad   \sum_{j=0}^3 (i,j)_4=\frac{q-1}{4}-\delta_{2i}.\Labele{eqn:q5-8}
\end{equation}

 These equations do not determine the cyclotomic numbers, as their values may depend on the choice of generator $w$ of the multiplicative group $\Fq^*$. However, they give the following solutions with two indeterminants $\ell_1,\ell_2$. The equality conditions give {a} grouping into five groups {of equal cyclotomic numbers}, the summations  (\ref{eqn:q1-8}) allows three of {these values} to be written in terms of the other two.

 When $q\equiv 1 \pmod 8$, using (\ref{eqn:q1-8}):
 \begin{equation}
 \begin{array}{ccccccccl}
   (0,0)_4&+&(0,1)_4&+&(0,2)_4&+&(0,3)_4&=&\frac{q-1}4-1\\
   (1,0)_4&+&(1,1)_4&+&(1,2)_4&+&(1,3)_4&=&\frac{q-1}4\\
   (2,0)_4&+&(2,1)_4&+&(2,2)_4&+&(2,3)_4&=&\frac{q-1}4\\
   (3,0)_4&+&(3,1)_4&+&(3,2)_4&+&(3,3)_4&=&\frac{q-1}4
\end{array} \Labele{eqn:big}
 \end{equation}
 If we start with $(1,2)_4$ using (\ref{eqn:q1-8}) we get $(1,2)_4=(-1,2-1)_4=(3,1)$. Also $(2,1)_4=(1,2)_4$, and so on.  We choose $\ell_1=(1,2)_4$, $\ell_2=(0,3)_4$  to be indeterminants  and get
\begin{eqnarray*}
  &&(1,2)_4=(1,3)_4=(2,1)_4=(2,3)_4=(3,1)_4=(3,2)_4=\ell_1\\
  &&(0,3)_4=(3,0)_4=(1,1)_4=\ell_2\\
  &&(0,2)_4=(2,0)_4=(2,2)_4=\textstyle\frac{q-1}{8}-\ell_1\\
  &&(0,1)_4=(1,0)_4=(3,3)_4=\textstyle\frac{q-1}{4}-2\ell_1-\ell_2\\
  &&(0,0)_4=3\ell_1-\textstyle\frac{q+7}{8}
\end{eqnarray*}
where the third, fourth and fifth equations use the third, second and first equations in (\ref{eqn:big}).

When $q \equiv 5 \pmod 8$, similarly,  we choose  $\ell_1=(1,0)_4$, $\ell_2 = (0,3)_4$ to be indeterminants and get
\begin{eqnarray*}
&&(1,0)_4=(1,1)_4=(2,1)_4=(2,3)_4=(3,0)_4=(3,3)_4=\ell_1\\
&&(0,1)_4=(1,3)_4=(3,2)_4=\ell_2\\
&&(0,3)_4=(1,2)_4=(3,1)_4=\textstyle\frac{q-1}{4}-2\ell_1-\ell_2\\
&&(0,0)_4=(2,0)_4=(2,2)_4=\textstyle\frac{q-5}{8}-\ell_1\\
&&(0,2)_4=3\ell_1-\textstyle\frac{q-5}{8}
\end{eqnarray*}

Recall that we require $XY$ to be a square and $(X+1)(Y+1)$ to be a non-square. Further we need $X/Y$ to be a fourth power, so we need $X,Y$ both in $R_1$ or both in $R_3$. Thus we need
\begin{enumerate}
\item $X,Y\in R_1$ and $X+1\in R_0\cup R_2$ and $Y+1\in R_1\cup R_3$, or
\item $X,Y\in R_3$ and $X+1\in R_1\cup R_3$ and $Y+1\in R_0\cup R_2$.
\end{enumerate}
We count the number $n$ of suitable values of $X$ and $Y$.
The number of $X$ with $X\in R_1$ and $X+1\in R_0\cup R_2$ is $(1,0)_4+(1,2)_4$ and the number of $Y$ with $Y\in R_1$ and $Y+1\in R_1\cup R_3$ is $(1,1)_4+(1,3)_4$  and similarly for the second part. Note that this automatically excludes $X=Y$. We get
\begin{eqnarray*}
  n&=&((1,0)_4+(1,2)_4)((1,1)_4+(1,3)_4)+((3,0)_4+(3,2)_4)((3,1)_4+(3,3)_4)\\
  &=&(\textstyle\frac{q-1}{4}-2\ell_1-\ell_2+\ell_1)(\ell_2+\ell_1)+(\ell_2+\ell_1)(\ell_1+\textstyle\frac{q-1}{4}-2\ell_1-\ell_2)\\
  &=&2(\textstyle\frac{q-1}{4}-\ell_1-\ell_2)(\ell_1+\ell_2)
\end{eqnarray*}
for  $q \equiv 1 \pmod 8$. The equation is the same for  $q \equiv 5 \pmod 8$. We will show $n>0$.

Firstly, for $q\equiv 1\pmod 8$, we have $q\ge 9$. From $(0,0)_4\ge 0$ we see $3\ell_1-\frac{q+7}{8}\ge 0$ and as $q\ge 9$ we have $\ell_1>0$ and so $\ell_1+\ell_2>0$.  Using $\ell_1>0$ we have  $\frac{q-1}{4}-\ell_1-\ell_2> \frac{q-1}{4}-2\ell_1-\ell_2=(0,1)_4\ge 0$ and so $n>0$ as required.

Secondly, for $q\equiv 5\pmod 8$, we assume that $q>5$. From $(0,2)_4\ge 0$ we get $3\ell_1\ge \frac{q-5}{8}>0$ so $\ell_1>0$ and hence $\ell_1+\ell_2>0$. From  $(0,3)_4\ge 0$ we get $\frac{q-1}{4}-2\ell_1-\ell_2\ge 0$ and so  $\frac{q-1}{4}-\ell_1-\ell_2>\frac{q-1}{4}-2\ell_1-\ell_2 \ge 0$ and so $n>0$.

We now show that such values for $X$ and $Y$ which make (\ref{eqn:stcond}) a non-square in $\Fqq$ give the required solution, {\it i.e.} the conditions of Lemma~\ref{lem:suff} are satisfied. Suppose firstly, that $X,Y\in R_1$, so we can write $X=wx^4$ and $Y=wy^4$ for some $x,y\in\Fq$. So from $X=-h^2u^2$ and $Y=-h^2/u^2$ we get $XY=(h^2)^2$ and $XY=(wx^2y^2)^2$ so we have $h^2=wx^2y^2$, a non square as required. As $-1$ is a square, we have $h^2\ne -1$, as required. In a similar way we get $u^4=X/Y=(wx^4)/(wy^4)=(\frac xy)^4$ so we have $u=\frac xy$. Note that $u\ne 0$ and $u^4=X/Y$ and as noted earlier, as $X\ne Y$ we have $u^4\ne 1$ as required. The second case where  $X,Y\in R_3$ is similar.

Thus we have proved the following lemma for the subcase $q \equiv 1 \pmod 4$.

\begin{lemma}\Label{lem:aoddq14}
    For odd $q$, with $q\equiv 1 \pmod 4$, $q >5$, there exists {Triple} O'Nan configurations for conic unitals.
  \end{lemma}

\subsubsection{BM-special Triple O'Nans for $q>3$ odd, $a$ non-square, $q\equiv 3\pmod 4$}

In a similar manner to the previous section, let $R_0=\{w^{2t}\bigm| t\in\Z\}$ be the set of squares of $\Fq^*$ (type 0 elements) and let $R_1=wR_0$ be the set of non-squares of $\Fq^*$ (type 1 elements). For $\ell,m\in\{0,1\}$, the notation $(\ell,m)_2$ is the number of elements $x,x+1\in\Fq^*$ such that $x$ is of type $\ell$ and $x+1$ is of type $m$.  There are $q-2$ such {numbers of} elements. For example, $(0,0)_2$ is the number of elements $x,x+1$ in $\F^*$ for which both $x$ and $x+1$ are squares in $\Fq$. Baumert \cite[p124]{baumert} presents equations relating the values of $(\ell,m)_2$.

We  consider $q\equiv 3\pmod 4$, so $q=4m+3=2f+1,f=2m+1$. In this case $-1$ is a non-square in $\Fq$. Then \cite[p121, 5.22]{baumert} gives
\begin{equation}
(1,0)_2=(1,1)_2=(0,0)_2=m=\frac{q-3}4,\quad (0,1)_2=m+1=\frac{q+1}4.\Labele{eqn:B2}
\end{equation}

We now show how to choose $u,h^2\in\Fq$ with the required properties.

We want to choose $X,Y$ to be of types $(0,0)_2,(0,1)_2$ (respectively) or  $(0,1)_2,(0,0)_2$ (respectively). We require that these sets are non-empty, that is $\frac{q-3}4,\frac{q+1}4\ge 1$. This implies that $q\ge 7$.

Note that for any square $x\in\Fq^*$, the square roots are $\pm\sqrt x$, and as $-1$ is a non-square, one of the roots is a square and one of the roots is a non-square. As $X,Y$ are squares, we can suppose $x,y$ are the roots of $X,Y$ (respectively) which are squares.  Define $u=\sqrt{\frac yx}$ and $h^2=-xy$. Note that this gives the requirement that $h^2$ is a non-square in $\Fq$.

We now check that this satisfies the requirements. We have $(-h^2/u^2+1)(-h^2u^2+1)$ is equal to
\[ \left(xy\frac xy +1\right)\left(xy\frac yx+1\right)=
(x^2+1)(y^2+1)=(X+1)(Y+1)
\]
which we have chosen to be a non-square, as required.

We have already shown that for such a choice, $h^2\in\Fq$ is a non-square. By definition of $X,Y$, $X\not=Y$ and $u\ne 0$. Now consider the condition $u^4\ne 1$. Suppose this does not hold, so $(u^2-1)(u^2+1)=0$. Firstly, consider the first term. If $u^2-1=0$, then we have $(X+1)(Y+1)=(-h^2+1)(-h^2+1)$, contradicting $X\ne Y$. Now consider the second term. As $-1$ is a non-square, the equation $x^2+1=0$ has no solutions over $\Fq$. Thus the condition $u^4\ne 1$ is satisfied, completing the proof in this case.

In summary, we have shown above that for all $q\ge 7$  a BM-special Triple O'Nans exists for $a$ non-square in the conic unital.
\end{proof}

 \subsection{Existence of BM-special Triple O'Nans for $q$ odd, $a$ square}

 It remains to show the existence of Triple O'Nans when $a$ is a square.

\subsubsection{BM-special Triple O'Nans for $q$ odd, $a$ square, $q\equiv 1\pmod 4$}

Without loss of generality we consider $\Uab$ with $a=1$ and $b=b_1e$ for some $b_1\in\Fq$ where $e=g^{\frac{q+1}2}$ for $g$ is a generator of $\Fqq^*$, so $\{1,e\}$ is a basis of $\Fqq$ over $\Fq$ and $w=e^2\in\Fq$ is a generator of $\Fq^*$.

\begin{theorem}\Label{thm:asq14}
  Suppose $q$ is odd  and $q\equiv 1\pmod 4$.   Let $h^2$ be any non-square in $\Fq$. Then there exists {in ${\mathcal U}_{1,b_1e}$ a}  BM-special Triple O'Nan having parameters $(1,b_1e,x,k,h,s,t)$ with $k=k_0+k_1e$, where $k_0 =-b_1e\rd\in\Fq,\ k_1=b_1\in\Fq$ and $x=\frac 1k$
  and $s,t\in\Fq^*$ with
  \begin{equation}
  t=\textstyle\frac{2h^2(\pm\sqrt{-1}-1)}{h^2+1},\ s=\pm\sqrt{-1}t.\Labele{eqn:stdf}
  \end{equation}

\end{theorem}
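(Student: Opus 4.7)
The plan is to verify directly the four BM-special Triple O'Nan equations (\ref{eqn:P1})--(\ref{eqn:N1}) for the prescribed parameters. Since $a=1$ and $x=1/k$, we obtain $\Delta=(xk)^2=1$ and $\Theta=(xk)^{q+1}=1$, which collapses the common quadratic coefficients and reduces the system to identities in $k,\rd,s,t,b_1,e$. Two facts are used throughout: $e^q=-e$ (since $e^2\in\Fq^*$ is a non-square) and $\rd^q=-\rd$ (since $\rd^2\in\Fq^*$ is a non-square); in particular $\boxed{z}=0$ for every $z\in\Fq$.

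First I would treat (\ref{eqn:P1}) and (\ref{eqn:Q1}). Writing $k=b_1 e(1-\rd)$, a direct expansion yields $\boxed{k}=2b_1 e$ and $\boxed{k\rd}=-2b_1 e\rd^2$. Combined with $\boxed{\Delta}=0$, $\boxed{\Delta\rd^2}=0$ (both $\Delta$ and $\rd^2$ lie in $\Fq$) and $\rd^{q+1}=-\rd^2$, the equations (\ref{eqn:P1}) and (\ref{eqn:Q1}) reduce to $2b_1 e=2b_1 e$ and $-2b_1 e\rd^2=-2b_1 e\rd^2$ respectively.

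The main obstacle is the verification of (\ref{eqn:M1}) and (\ref{eqn:N1}). The plan is to substitute $s=\pm i t$ with $i=\sqrt{-1}\in\Fq$ (available since $q\equiv 1\pmod 4$) to obtain the simplified forms
\[
W=\frac{\rd(i-1)}{i-\rd},\quad U=\frac{it(1-\rd)}{i-\rd},\quad V=\frac{\rd(1-i)}{1-i\rd},\quad Z=\frac{-it(1-\rd)}{1-i\rd}.
\]
The Frobenius images are then computed using $(i-\rd)^q=i+\rd$ and $(1-i\rd)^q=1+i\rd$. One evaluates $\boxed{W^2}+2b_1 e\, W^{q+1}$ and $\boxed{kW}+\boxed{U}$ and compares them (and similarly for $V,Z$); the prescribed value $t=2\rd^2(\pm i-1)/(\rd^2+1)$ is engineered precisely to make the two sides agree. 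After clearing denominators, each equation reduces to a polynomial identity in $\rd$ and $i$ over $\Fq$, verified by expansion. The bulk of the work is careful sign and denominator bookkeeping; beyond the clever choice of $t$, no further conceptual ingredient is needed.

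Finally, non-degeneracy is checked: $\rd^2+1\ne 0$ since $-1$ is a square in $\Fq$ while $\rd^2$ is not; $s\ne t$ since $i\ne 1$; the denominators $s-t\rd=t(\pm i-\rd)$ and $t-s\rd=t(1\mp i\rd)$ are nonzero since $\rd\notin\Fq$ whereas $\pm i, \mp i\in\Fq$; and $j\ne k$ since $\rd\ne 1$. These conditions ensure that the seven points $P,Q,V,X,Y,M,N$ are distinct, so the construction yields a genuine BM-special Triple O'Nan configuration.
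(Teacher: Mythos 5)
Your proposal follows essentially the same route as the paper: set $\Delta=\Theta=1$, use $e^q=-e$ and $\rd^q=-\rd$, check the first two BM-special Triple O'Nan equations directly from the definition of $k$, and then verify the $M$ and $N$ equations by showing that the choice of $t$ in (\ref{eqn:stdf}) makes the two sides agree (the paper organises this by observing $2b_1e W^{q+1}=\boxed{kW}$ and reducing to $\boxed{W^2}=\boxed{U}$). The only quibble is a sign slip in your simplified $Z$ (with $s=it$ one gets $Z=\tfrac{it(1-\rd)}{1-i\rd}$, not $\tfrac{-it(1-\rd)}{1-i\rd}$), which does not affect the strategy; your explicit non-degeneracy check at the end is a small addition the paper leaves implicit.
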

\begin{proof}
  Note that $q\equiv 1\pmod 4$ is a necessary and sufficient condition for $\sqrt{-1} \in\Fq$. Now   $s^2+t^2=0$, and as $a=1$ and $x=\frac 1k$ we have $\Delta=ax^2k^2=1$ . Further $\Theta=\sqrt{ (\frac\Delta a)^{q+1}}=1$. As $h^2\in\Fq$ is a non-square, it follows that $h=h_1e$ for some $h_1\in\Fq$ and $h^q=-h$ and $h^{q+1}=-h^2$.

  We need to show that $(1,b_1e,x,k,h,s,t)$ satisfies the BM-special Triple O'Nan equations (\ref{eqn:P1}) to (\ref{eqn:N1}). We make a few more calculations, in general using (\ref{eqn:WUdef})
  \[
W^2=\frac{h^2(s-t)^2}{(s-t\rd)^2},\quad W^{2q}=\frac{h^2(s-t)^2}{(s-t\rd)^{2q}}
\]
\[
\boxed{W^2}=W^2-W^{2q}=\frac{h^2(s-t)^2}{(s-t\rd)^{2(q+1)}}(2st)(\rd-\rd^q)==\frac{h^2(s-t)^2}{(s-t\rd)^{2(q+1)}}(2st)(2h)
\]
Further,
\begin{equation}
\boxed{kW}=kW-(kW)^q=\frac{(s-t)^2}{(s-t\rd)^{q+1}}\rd^{q+1}(2b_1e)=\frac{(s-t)^2}{(s-t\rd)^{q+1}}(-h^2)(2b_1e)\Labele{eqn:kW1}
\end{equation}
and similarly for $h^2\in\Fq$

\[
\boxed{U}=U-U^q=\frac{-st(s-t)}{(s-t\rd)^{q+1}}(2\rd).
\]
As $\Delta=1$ and $h^2\in\Fq$ we have $\boxed\Delta=\boxed{\Delta h^2}=0$ and so the BM-special Triple O'Nan equations can be simplified to:
\begin{eqnarray}
  2b_1e&=&k-k^q\Labele{eqn:101}\\
  2b_1e(-h^2)&=&k+k^q\Labele{eqn:102}\\
  \boxed{W^2}+2b_1eW^{q+1}&=&kW-(kW)^q+\boxed{U}\Labele{eqn:103}\\
  \boxed{V^2}+2b_1eV^{q+1}&=&kV-(kV)^q+\boxed{Z}\Labele{eqn:104}
\end{eqnarray}
(\ref{eqn:101}) and (\ref{eqn:102}) hold from the definition of $k_0,k_1$.

We now show that (\ref{eqn:103}) holds. Using (\ref{eqn:WUdef}), with $h^2\in\Fq$
\[
W^{q+1}=\frac{\rd^{q+1}(s-t)^2}{(s-t\rd)(s-t\rd^q)}=\frac{-h^2(s-t)^2}{(s-t\rd)^{q+1}}
\]
and so $2b_1eW^{q+1}=\boxed{kW}$. So  (\ref{eqn:103}) holds if we can show $\boxed{{W}^2}-\boxed{U}=0$.
\begin{eqnarray*}
  \boxed{W^2}-\boxed{U}&=&\frac{h^2(s-t)^2}{(s-t\rd)^{2(q+1)}}(2st)(2\rd)-\frac{-st(s-t)}{(s-t\rd)^{q+1}}(2\rd)\\
  &=&\frac{2h^2 st (s-t)}{(s-t\rd)^{2(q+1)}}\left(2h^2(s-t)+(s^2-h^2t^2)\right)\\
  &=&\frac{2h^2 st (s-t)}{(s-t\rd)^{2(q+1)}}\left(2h^2t(\pm \sqrt{-1}-1)-t^2(1+h^2)\right)\\
  &=&\frac{2h^2 st (s-t)}{(s-t\rd)^{2(q+1)}}[t(1+h^2)]\left(\frac{2h^2(\pm\sqrt{-1}-1)}{h^2+1}-t\right)
\end{eqnarray*}
which, by  (\ref{eqn:stdf}), is zero. Similarly  (\ref{eqn:104})
\begin{eqnarray*}
  \boxed{V^2}-\boxed{Z}&=&\frac{2h^2 st (t-s)}{(t-sh)^{2(q+1)}}\left(2h^2(t-s)+(t^2-h^2s^2)\right)
\end{eqnarray*}
is also zero.

\end{proof}

\subsubsection{BM-special Triple O'Nans for $q$ odd, $a$ square, $q\equiv 3\pmod 4$}
For $a$ is square in $\Fqq$ we may, without loss of generality, consider the BM unital $\Uab $ with $a=1$ and $b=b_1e$ for some $b_1\in\Fq$. First we note that, for $q\equiv 3\pmod 4$, $-1$ is not a square in $\Fq$ and so $-w$ is a square in $\Fq$. Let $h=\sqrt{-1}\in \Fqq \backslash \Fq$ be such that $h^2=-1$ and $\theta\in\Fqq$ be such that $\theta^2=e$. Note that, as $q\equiv 3\pmod 4$, $\frac{q+1}2$ is even and so $e$ is a square. Thus  $\Theta=\theta^{q+1}=\sqrt{-w} (=\frac e{\sqrt{-1}})$.
\begin{theorem}\Label{lem:q3asqONAN}
  Suppose $q$ is odd  and $q\equiv 3\pmod 4$.    Let $k\in\Fqq$ be defined by $k=k_0+k_1e$ with $k_0,k_1\in\Fq$ where
  \begin{equation}
    k_0=\sqrt{-w}(-1+b_1\sqrt{-w}),\quad
    k_1=1+b_1\sqrt{-w}.\Labele{eqn:kkdef}
   \end{equation}
Let $x=\theta/k$.

  Then there exists {in ${\mathcal U}_{1,b_1e}$ a} BM-special Triple O'Nan with parameters $(1,b_1e,x,k,h,s,t)$, if there exists  $s,t$ in $\Fq^*$, with $s^2\ne t^2$, satisfying
  \begin{equation}
    -\frac{s^2+t^2}{2(s+t)}=\sqrt{-w}\Labele{eqn:stdf1}.
  \end{equation}

\end{theorem}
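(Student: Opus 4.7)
The strategy mirrors the proof of Theorem~\ref{thm:asq14}. Since $V,X,Y$ already lie in $\Uab$, it suffices to verify that $P,Q,M,N$ also lie in $\Uab$ using the defining equation \eqref{eqn:unital-pt} with $a=1$ and $b=b_1e$.

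First I would collect the basic arithmetic. With $a=1$ and $x=\theta/k$ we have $xk=\theta$, so $(xk)^2=e$, $(xk)^{2q}=e^q=-e$, and $(xk)^{q+1}=\theta^{q+1}=\Theta=\sqrt{-w}$. Since $-1$ is a non-square in $\Fq$ for $q\equiv 3\pmod 4$, $h=\sqrt{-1}\notin\Fq$, giving $h^q=-h$ and $h^{q+1}=1$. The paper's sign convention $\sqrt{-w}=e/h$ then supplies the key conversion $e\sqrt{-w}/h=-w$, which will bridge the $e$-coefficient and $\Fq$-coefficient parts of the computation.

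Next I would verify that $Q\in\Uab$: \eqref{eqn:unital-pt} reduces to $2e+2b_1e\sqrt{-w}=k-k^q=2k_1e$, forcing $k_1=1+b_1\sqrt{-w}$, matching \eqref{eqn:kkdef}. For $P=(xkh,kh,1)$, \eqref{eqn:unital-pt}, using $h^2=-1$ and $h^{q+1}=1$, becomes $-2e+2b_1e\sqrt{-w}=2hk_0$; dividing by $h$ and applying $e/h=\sqrt{-w}$ yields $k_0=-\sqrt{-w}-b_1w$, again matching \eqref{eqn:kkdef}.

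The substantive step is to verify that $M$ and $N$ lie in $\Uab$. Using $h^q=-h$ and $h^2=-1\in\Fq$, the norm $(s-th)^{q+1}=(s-th)(s+th)=s^2+t^2$; hence
\[
W^{q+1}=\frac{h^{q+1}(s-t)^2}{(s-th)^{q+1}}=\frac{(s-t)^2}{s^2+t^2}.
\]
Analogous manipulations express $W^2-W^{2q}$, $kW-(kW)^q$, and $U-U^q$ as explicit quotients over powers of $s^2+t^2$. Substituting these together with the formulas for $k_0,k_1$ into equation \eqref{eqn:M} for $M$, and multiplying through by $(s^2+t^2)^2$ to clear denominators, reduces the entire condition to a polynomial identity in $s,t,b_1,w,\sqrt{-w}$ that is equivalent to $s^2+t^2+2(s+t)\sqrt{-w}=0$, i.e., precisely \eqref{eqn:stdf1}. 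The condition for $N$ follows from the same computation after interchanging $s$ and $t$ (which swaps $W\leftrightarrow V$ and $U\leftrightarrow Z$); since \eqref{eqn:stdf1} is symmetric in $s,t$, the reduced identity is the same.

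The principal obstacle lies in this final step: carefully tracking the four Frobenius differences $W^2-W^{2q}$, $kW-(kW)^q$, $U-U^q$, and the term $2b_1e\Theta W^{q+1}$, separating their $\Fq$-parts from their $e$-parts, and verifying that everything collapses precisely to the single condition $-\frac{s^2+t^2}{2(s+t)}=\sqrt{-w}$.
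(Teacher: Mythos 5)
Your proposal follows essentially the same route as the paper: fix $k_0,k_1$ by forcing the unital conditions for $Q$ and $P$, use $h^{q+1}=1$ and $(s-th)^{q+1}=s^2+t^2$ to express the Frobenius differences $\boxed{\Delta W^2}$, $\boxed{kW}$, $\boxed{U}$ explicitly, and reduce the condition for $M$ to $s^2+t^2+2(s+t)\sqrt{-w}=0$, with $N$ handled by the $s\leftrightarrow t$ symmetry. The only difference is that the paper carries out the central cancellation in full (in particular showing that the $b_1$-contribution of $\boxed{kW}$ exactly absorbs the term $2b_1e\Theta W^{q+1}$), whereas you assert it; the claim is correct as stated.
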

\begin{proof}   We have
  \[
   h =\textstyle\frac1{\sqrt{-w}} e,\quad
   h ^q=-\sqrt{-1}=-h,\quad h ^{q+1}=1,\quad\textstyle \frac1{\sqrt{-1}}=\sqrt{-w}\ \frac 1e.
   \]
 As $x=\theta/k$,  we have $\Delta=ax^2k^2=1\cdot\theta^2=e$.

  Consider the BM-special Triple O'Nan equations:
\begin{eqnarray}
  \boxed{\Delta}+2b_1e\Theta&=&\boxed k\Labele{eqn:201}                    \\
  \boxed{\Delta h^2}+2b_1e\Theta h^{q+1}&=&\boxed{kh}   \Labele{eqn:202}      \\
  \boxed{\Delta W^2}+2b_1e\Theta W^{q+1}&=&\boxed{kW}+\boxed{U} \Labele{eqn:M3}   \\
  \boxed{\Delta V^2}+2b_1e\Theta V^{q+1}&=&\boxed{kV}+\boxed{Z}\Labele{eqn:N3}
\end{eqnarray}
In our case we have $h^2=-1\in\Fq$, $\Delta=e$ and $\Theta=\sqrt{-w}$ so (\ref{eqn:201}) and (\ref{eqn:202}) become
\begin{eqnarray}
  2e+2b_1e\sqrt{-w}&=&k-k^q\ =\ 2k_1e                          \Labele{eqn:P3}\\
  -2e+2b_1e\sqrt{-w}&=&\sqrt{-1}(k+k^q)\ = \ \sqrt{-1}\ 2k_0     \Labele{eqn:Q3}
\end{eqnarray}
Using the definition of $k$ from (\ref{eqn:kkdef}), it is straightforward to check that  (\ref{eqn:P3}) and (\ref{eqn:Q3}) hold. It remains to show that (\ref{eqn:M3}) and (\ref{eqn:N3}) hold.

Consider (\ref{eqn:M3}). Firstly
\begin{equation}
(s-th )^{q+1}=(s-th )(s+th )=s^2-h^2t^2=s^2+t^2.\Labele{eqn:std}
\end{equation}
We have
\begin{eqnarray*}
  \boxed{  \Delta W^2}&=&e(W^2+W^{2q})\\
  &=&e\left(\frac{-(s-t)^2}{(s-th )^2}+\frac{-(s-t)^2}{(s-th )^{2q}}\right)\\
  &=&\frac{-e(s-t)^2}{(s-th )^{2(q+1)}}\left((s^2-t^2+2st\sqrt{-1})+(s^2-t^2-2st\sqrt{-1})\right)\\
  &=&\frac{-2e(s-t)^2(s^2-t^2)}{(s-th )^{2(q+1)}}.
\end{eqnarray*}
and
\begin{eqnarray}
  2b_1e\Theta W^{q+1}=2b_1e\sqrt{-w}\frac{(s-t)^2}{(s-th )^{q+1}}\Labele{eqn:b1}
\end{eqnarray}
and
\begin{eqnarray*}
  \boxed{kW}&=&\frac{k\sqrt{-1}(s-t)}{s-th }-\frac{k^q(-\sqrt{-1})(s-t)}{(s-th )^q}\\
  &=&(s-t)\sqrt{-1}\left(\frac k{s-th }+\frac {k^q}{(s-th )^q}\right)\\
  &=&\frac{(s-t)\sqrt{-1}}{(s-th )^{q+1}}\left(k(s+th )+k^q(s-th )\right)\\
  &=&\frac{(s-t)\sqrt{-1}}{(s-th )^{q+1}}\left(s(k+k^q)+th (k-k^q)\right)\\
  &=&\frac{(s-t)\sqrt{-1}}{(s-th )^{q+1}}\left(\frac s{\sqrt{-1}} 2e(-1+b_1\sqrt{-w})+t\sqrt {-1}\ 2e(1+b_1\sqrt{-w})\right)\\
  &=&\frac{(s-t)}{(s-th )^{q+1}}\left(s2e(-1+b_1\sqrt{-w})-t2e(1+b_1\sqrt{-w})\right)\\
  &=&\frac{(s-t)}{(s-th )^{q+1}}\left(-2e(s+t)\right) + \frac{(s-t)}{(s-th )^{q+1}}\left(2eb_1\sqrt{-w}(s-t)\right)\\
  &=&\frac{(s-t)}{(s-th )^{q+1}}\left(-2e(s+t)\right) +2b_1e\Theta W^{q+1}.
\end{eqnarray*}
by (\ref{eqn:b1}), and
\begin{eqnarray*}
  \boxed{U}&=&\frac{st(1-h )}{s-th }-\frac{st(1+h )}{(s-th )^q}\\
  &=& -\frac{2st\sqrt{-1}(s-t)}{(s-th )^{q+1}}.
\end{eqnarray*}
Using (\ref{eqn:std}) we have
\begin{eqnarray*}
  &&\boxed{\Delta W^2}+2b_1e\Theta W^{q+1}-(\boxed{kW}+\boxed{U})\\
  &=&\frac{-2e(s-t)^2(s^2-t^2)}{(s-th )^{2(q+1)}}+2b_1e\Theta W^{q+1}\\
  &&\quad\quad -\ \left(\frac{(s-t)}{(s-th )^{q+1}}\left(-2e(s+t)\right) +2b_1e\Theta W^{q+1}-\frac{2st\sqrt{-1}(s-t)}{(s-th )^{q+1}}\right)\\
  &=&\frac{-2e(s-t)^2(s^2-t^2)}{(s-th )^{2(q+1)}}-\left(\frac{(s-t)}{(s-th )^{q+1}}\left(-2e(s+t)\right) -\frac{2st\sqrt{-1}(s-t)}{(s-th )^{q+1}}\right)\\
  &=&\frac {s-t}{(s-th )^{2(q+1)}}\left(-2e(s-t)(s^2-t^2)-(s^2+t^2)(-2e(s+t))+2st\sqrt{-1}(s^2+t^2)\right)\\
  &=&\frac {s-t}{(s-th )^{2(q+1)}}\left(2e(s+t)(2st)+2st\sqrt{-1}(s^2+t^2)\right)\\
  &=&\frac {2st(s-t)}{(s-th)^{2(q+1)}}\left(2e(s+t)+\sqrt{-1}(s^2+t^2)\right)\\
  &=&\frac {2st(s-t)}{(s-th)^{2(q+1)}}[2\sqrt{-1}(s+t)]\left(\frac e{\sqrt{-1}}+\frac{s^2+t^2}{2(s+t)}\right)\\
  &=&0
\end{eqnarray*}
by (\ref{eqn:stdf1}). Note that $2e(s+t)+\sqrt{-1}(s^2+t^2)$ is symmetrical in $s$ and $t$, and so  (\ref{eqn:N3}) also holds.
\end{proof}

\begin{theorem}\Label{thm:q3asqONAN}
   Suppose $q$ is odd  and $q\equiv 3\pmod 4$. Then there exists BM-special Triple O'Nan configurations  in any BM unital $\Uab $ with $a$ square if $q>3$.
\end{theorem}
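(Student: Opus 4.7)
The plan is to reduce the claim to an elementary counting problem, using the heavy algebraic lifting already packaged into Theorem~\ref{lem:q3asqONAN}. Corollary~\ref{cor:oddqnum} tells us that the existence of a BM-special Triple O'Nan in $\Uab$ depends only on whether $a$ is a square, so it suffices to exhibit such a configuration in the representative unital $\U_{1,b_1e}$; Theorem~\ref{lem:q3asqONAN} then reduces this in turn to finding $s, t \in \Fq^*$ with $s^2 \ne t^2$ satisfying
\[
-\frac{s^2+t^2}{2(s+t)} = \sqrt{-w}.
\]
Since $q\equiv 3\pmod 4$ forces $-1$ to be a non-square in $\Fq$, and $w$ is itself a non-square, the element $c = \sqrt{-w}$ indeed lies in $\Fq^*$.

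The key algebraic step is to complete the square: the displayed equation rearranges to
\[
(s+c)^2 + (t+c)^2 = 2c^2.
\]
Setting $S = s+c$, $T = t+c$, we are asking for points on the ``circle'' $S^2 + T^2 = 2c^2$ in $\Fq^2$ subject to the side-conditions $S \ne c$ (so $s \ne 0$), $T \ne c$ (so $t \ne 0$), $S \ne T$ (so $s \ne t$), and $S + T \ne 2c$ (so $s \ne -t$). Since $q\equiv 3\pmod 4$, the form $X^2 + Y^2$ is the norm form of $\Fqq/\Fq$, and the norm map $\Fqq^* \to \Fq^*$ is surjective with kernel of size $q+1$, so for every $N \in \Fq^*$ the equation $X^2 + Y^2 = N$ has exactly $q+1$ solutions in $\Fq^2$.

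It remains to bound the number of bad solutions. A direct substitution shows that each of the four side-conditions, combined with $S^2 + T^2 = 2c^2$, forces $(S,T) \in \{(c,c), (c,-c), (-c,c), (-c,-c)\}$, and these four points are distinct since $c \ne 0$ and the characteristic is odd. Hence at most four of the $q+1$ solutions are bad, leaving at least $q - 3$ valid pairs $(s,t)$, which is positive for every $q \equiv 3\pmod 4$ with $q > 3$ (equivalently $q \ge 7$). The only real obstacle I anticipate is the accounting of degenerate solutions, but the four exceptional conditions interact tamely enough (each meets the circle in at most two points and they collapse to the four corners $(\pm c, \pm c)$) that the four-point bound is comfortable, after which Theorem~\ref{lem:q3asqONAN} delivers the required Triple O'Nan.
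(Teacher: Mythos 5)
Your proposal is correct, and after the common reduction to Theorem~\ref{lem:q3asqONAN} it handles the resulting existence problem by a genuinely different route. Writing $c=\sqrt{-w}\in\Fq^*$, your completion of the square turns (\ref{eqn:stdf1}) into $(s+c)^2+(t+c)^2=2c^2$ with $2c^2\ne 0$; since $-1$ is a non-square for $q\equiv 3\pmod 4$, the form $S^2+T^2$ is indeed the $\Fqq/\Fq$ norm form, so the circle carries exactly $q+1$ points, and your identification of the excluded solutions with the four corners $(\pm c,\pm c)$ is right (each of the four degeneracy conditions, intersected with the circle, lands inside that set, and $s+t\ne 0$ --- needed for (\ref{eqn:stdf1}) to make sense --- is already covered by $s\ne -t$). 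This leaves $q-3>0$ admissible pairs for $q>3$, which I verified numerically for $q=7$. The paper instead substitutes $X=s/(2\sqrt{-w})$, $Y=t/(2\sqrt{-w})$ to reach $X(X+1)=-Y(Y+1)\ne 0$ and counts with the cyclotomic numbers $(i,j)_2$, obtaining $\frac{q-3}{2}\cdot\frac{q-1}{2}$. It is worth noting that the paper's tally counts pairs $(X,Y)$ for which the two sides of $X(X+1)=-Y(Y+1)$ are both nonzero squares --- a necessary condition for equality, not equality itself --- so it is not the number of solutions (the exact count is $q-3$, agreeing with yours, as one sees by running your argument on the conic $X^2+Y^2+X+Y=0$). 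In that sense your norm-form count is not merely an alternative but is tighter and closes a step the paper leaves implicit; the trade-off is only that the paper's character-based bookkeeping parallels the cyclotomic machinery it already set up for the $a$ non-square case.
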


\begin{proof}
  From Theorem~\ref{lem:q3asqONAN} we need to show there exists $s,t\in\Fq^*$ with
  \[
\sqrt{-w}=-\frac{s^2+t^2}{2(s+t)}
\]
and $s^2\ne t^2$.
Now $\sqrt{-w}=-\frac{s^2+t^2}{2(s+t)}$ and $s^2=t^2$ implies $s=t=-2\sqrt{-w}$ since $s+t\not=0$. Put $X=\frac{s}{2\sqrt{-w}}$ and $Y=\frac{t}{2\sqrt{-w}}$. Then we require $X^2+Y^2+X+Y=0$ and $X,Y \not= 0,-1$. That is $X(X+1)=-Y(Y+1)$ and non-zero. Since $-1$ is a non-square then $X,X+1$ are both square or both non-square and $Y,Y+1$ has one square and one non-square. Note that this automatically excludes $X=Y$. The number of such $X,Y$ may be expressed as
\begin{eqnarray*}
  &&(0,0)_2(0,1)_2+(0,0)_2(1,0)_2+(1,1)_2(1,0)_2+(1,1)_2(0,1)_2\\
  &=&\left((0,0)_2+(1,1)_2\right)\left((1,0)_2+(0,1)_2\right)\\
  &=&\left(\frac{q-3}2\right)\left(\frac{q-1}2\right)
\end{eqnarray*}
which is $>0$ for $q>3$.
\end{proof}

  \section{Further properties of BM-special Triple O'Nans}

\begin{figure}[ht]
\centering
\input{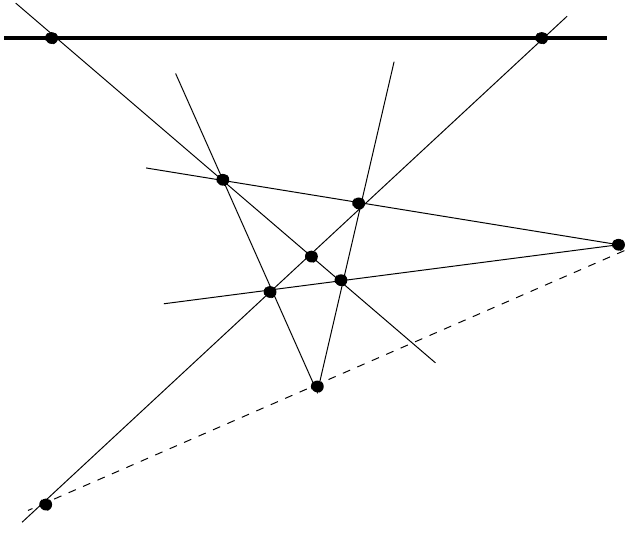_t}
\caption{Extended Triple O'Nan}\label{fig:extendonan}
\end{figure}

\begin{lemma}\Label{lem:EF}
  Consider a BM-special Triple O'Nan using the notation of Section~\ref{sec:typea}. Then the point $F=MN\cap XY$ is always a point of the unital.
\end{lemma}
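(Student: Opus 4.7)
The plan is a direct coordinate computation showing that the affine $y$-coordinate of $F = MN \cap XY$ simplifies to $\frac{2st}{s+t}$, which lies in $\Fq$, so that $F$ is one of the $q$ affine points of $\Uab$ on the tangent line $[1,0,0]$.

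First I would parametrise the intersection. Since $XY = [1,0,0]$, the point $F = MN \cap XY$ is the unique combination $\alpha M + \beta N$ whose first coordinate vanishes. Using the coordinates
\[
M = (kxW,\ kW + U,\ 1), \qquad N = (kxV,\ kV + Z,\ 1)
\]
from Section~\ref{sec:TypeA}, this combination is (up to a scalar) $V\cdot M - W\cdot N = \bigl(0,\ VU - WZ,\ V - W\bigr)$. Here $V, W, U, Z$ denote the scalars defined in (\ref{eqn:WUdef})--(\ref{eqn:VZdef}), not the point $(0,0,1)$.

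Next I would simplify $V-W$ and $VU-WZ$ by placing everything over the common denominator $(s-t\rd)(t-s\rd)$. A short routine manipulation gives
\[
V - W = \frac{\rd(1-\rd)(s+t)(t-s)}{(s-t\rd)(t-s\rd)}, \qquad VU - WZ = \frac{2st\rd(1-\rd)(t-s)}{(s-t\rd)(t-s\rd)},
\]
so that, whenever $s + t \ne 0$, the quotient is $\frac{VU - WZ}{V - W} = \frac{2st}{s+t}$ and hence $F = \bigl(0,\ \tfrac{2st}{s+t},\ 1\bigr)$. In the degenerate case $s + t = 0$ the denominator $V - W$ vanishes while $VU - WZ \ne 0$, so $F = (0,1,0) = T_\infty$.

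Finally I would conclude: the line $XY = [1,0,0]$ meets $\Uab$ in the $q+1$ points $\{T_\infty\} \cup \{(0,c,1) : c \in \Fq\}$, as already used in Section~\ref{sec:intro}. Since $s, t \in \Fq^*$, the value $\tfrac{2st}{s+t}$ lies in $\Fq$ in the generic case, so $F \in \Uab$; and in the edge case $s = -t$ we still have $F = T_\infty \in \Uab$. There is no substantive obstacle: the lemma reduces to one explicit rational-function simplification, the only mild subtlety being the need to treat separately the degenerate case $s + t = 0$ in which the intersection falls on the line at infinity.
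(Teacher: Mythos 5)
Your proof is correct and follows essentially the same route as the paper: a direct coordinate computation showing $F=\bigl(0,\tfrac{2st}{s+t},1\bigr)$ with $\tfrac{2st}{s+t}\in\Fq$, the paper reaching the same value by first simplifying the line $MN$ to $[kh(s+t)-(1+h)st,\ -kxh(s+t),\ 2kxhst]$ and intersecting with $[1,0,0]$ (its stated $f=\tfrac{2st}{s+1}$ is an evident typo for $\tfrac{2st}{s+t}$). Your explicit handling of the degenerate case $s+t=0$, where $F=T_\infty$, is a small point the paper passes over silently; the only slip is terminological, since $[1,0,0]$ is a $(q+1)$-secant of $\Uab$, not a tangent as your opening sentence calls it.
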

\begin{proof}
  From Section~\ref{sec:TypeA} we have
\[
M=\left(kxW,kW+U,1\right),\quad
N=\left(kxV,kV+Z,1\right).
\]
and so
\begin{equation}
  MN=[kW+U-(kV+Z),\ kx(V-W),\ kx(WZ-VU)].\Labele{eqn:MN}
\end{equation}
using the definitions of $W,U,V,Z$ from equations (\ref{eqn:WUdef}) and (\ref{eqn:VZdef}) we obtain
\begin{eqnarray*}
  WZ-VU&=& \frac{st(s-t)2h(1-h)}{(s-th)(t-sh)}\\
  Z-U&=& \frac{st(s-t)(1-h)(1+h)}{(s-th)(t-sh)}\\
  W-V&=& \frac{(s-t)(s+t)h(1-h)}{(s-th)(t-sh)}
\end{eqnarray*}
and so
\begin{eqnarray*}
  MN&=&[k(s-t)(s+t)(1-h)-st(s-t)(1-h)(1+h),\\
    &&\quad -kx(s-t)(s+t)(1-h),\ \ kxst(s-t)2h(1-h)]\\
  &=&[kh(s+t)-(1+h)st,\ -kxh(s+t),\ 2kxhst]
\end{eqnarray*}
which, by inspection, meets $[1,0,0]$ in the point $F=(0,f,1)$ where  $f=\frac{2st}{s+1}\in\Fq$. As $s,t\in\Fq$ it follows that $F$ is a point of the unital.
\end{proof}

We now calculate $E=MN\cap PQ=MN\cap [-1,x,0]$, for $f$ defined above
\[
E=\left( (2khst)x,\ 2khst,\  (h+1)(s+t)\right)=
\left( \frac{(khf)x}{h+1},\ \frac{khf}{h+1},\ 0\right)
\]
and conjecture that if $E$ is a point of the unital, then $a$ is a square.

\section{Conclusion}
This paper has made a contribution to support Conjecture~\ref{conj:big}.

The O'Nans of Feng and Li has the property that one of its {lines} contains the special point $T_\infty=(0,1,0)$, and we showed that this does not extend to a Triple O'Nan. However, we give a construction of BM-special O'Nans. From Section~\ref{sec:typea}, we see that a Triple O'Nan contains three O'Nans, and hence that {there exist O'Nans}  with the property that none of {their lines contain} the special point $T_\infty$.

As far as the authors are aware, there are no BM-ordinary Triple O'Nan constructions for BM unitals in the the case $q$ is odd.

For the interested reader, the authors have done further work on Triple O'Nans for BM unitals in the case $q$ even. {That work will be posted on Arxiv}.

\end{document}